\DeclareSymbolFont{largesymbolsA}{U}{txexa}{m}{n}
\DeclareMathSymbol{\varprod}{\mathop}{largesymbolsA}{16}
\newtheorem{theorem}{Theorem}[section]
\newtheorem{lem}[theorem]{Lemma}
\newtheorem{cor}{Corollary}[theorem]
\theoremstyle{definition}
\theoremstyle{remark}
\newtheorem{remark}[theorem]{Remark}
\numberwithin{equation}{section}
\begin{document}
\author{Ankush Goswami}
\address{Research Institute for Symbolic Computation (RISC), JKU, Linz.}
\email{ankushgoswami3@gmail.com, ankush.goswami@risc.jku.at}
\title[Congruences for generalized Fishburn numbers at roots of unity]{Congruences for generalized Fishburn numbers at roots of unity}
\thanks{}

\subjclass[2010]{33D15, 11P83, 05A19, 11B65}
\keywords{Generalized Fishburn numbers, congruences, torus knots, roots of unity.}

\date{}

\begin{abstract}
There has been significant recent interest in the arithmetic
properties of the coefficients of $F(1-q)$ and $\mathscr{F}_t(1-q)$
where $F(q)$ is the Kontsevich-Zagier strange series and 
 $\mathscr{F}_t(q)$ is the strange series associated to a family 
of torus knots as studied by Bijaoui, Boden, Myers, Osburn, Rushworth, Tronsgard
and Zhou. In this paper, we prove prime power congruences for two families of generalized Fishburn numbers, namely, for the coefficients of 
$(\zeta_N - q)^s F((\zeta_N - q)^r)$ and $(\zeta_N - q)^s \mathscr{F}_t((\zeta_N -
q)^r)$, where $\zeta_N$ is an $N$th root of unity and $r$, $s$ are certain integers.
\end{abstract}
\maketitle
\section{Introduction}
In \cite{AS}, Andrews and Sellers studied congruence properties for $\xi(n)$ which are the coefficients in the $q$-series expansion
\begin{eqnarray}\label{Fish}
F(1-q)=:\sum_{n\geq 0}\xi(n)q^n=1+q+2q^2+5q^3+15q^4+53q^5+\cdots
\end{eqnarray}
where $F(q):=\sum_{n\geq 0}(q)_n$ is the Kontsevich-Zagier ``strange" series \cite{Z} and 
\begin{eqnarray}
(q)_n=(q;q)_n:=\prod_{k=1}^n(1-q^k),
\end{eqnarray}
is the standard $q$-hypergeometric notation, valid for $n\in\mathbb{N}\cup\{\infty\}$. Here the term ``strange" is used to indicate the fact that $F(q)$ does not converge anywhere inside or outside the unit disk, but is well-defined when $q$ is a root of unity (where it is finite). If $\zeta_N$ is an $N$th root of unity, then 
\begin{eqnarray}\label{con1}
F(\zeta_N-q)=\sum_{i=0}^{N-1}(\zeta_N-q;\zeta_N-q)_i+\sum_{i=0}^{N-1}\sum_{n=1}^\infty (\zeta_N-q;\zeta_N-q)_{nN+i}.
\end{eqnarray}
Noting that $(\zeta_N-q;\zeta_N-q)_{nN+i}=O(q^n)$, we immediately see that the sum in the right-hand side of (\ref{con1}) is absolutely convergent for $|q|<1$. Thus $F(\zeta_N-q)$ and in particular, $F(1-q)$ is well-defined.

Recently, Garvan \cite{G} studied congruence properties for what he called $r$-Fishburn numbers which are coefficients in the $q$-series expansion of $F((1-q)^r)=:\sum_{n\geq 0}\xi_r(n)q^n$ (for $r\in\mathbb{Z}\setminus\{0\}$) and Straub \cite{Straub} extended these congruences to prime power modulus which, in the case $r=1$, were conjectured by Andrews and Sellers \cite{AS}. For example, we have
\begin{eqnarray}
&\xi(5^\ell n-1)\equiv \xi(5^\ell n-2)\equiv 0\;(\mbox{mod}\;5^\ell),&\\
&\xi_2(7^\ell n-1)\equiv \xi_2(7^\ell n-2)\equiv \xi_2(7^\ell n-3)\equiv 0\;(\mbox{mod}\;7^\ell),&\\
&\xi_3(19^\ell n-1)\equiv 0\;(\mbox{mod}\;19^\ell)&
\end{eqnarray}
for all natural numbers $\ell$ and $n$.

The proofs of the aforementioned results rely on a crucial divisibility property of the dissections of the partial sums of the corresponding $q$-series by a power of $(1-q)$. In the case for $F(q)$, this property was first proved by Andrews and Sellers \cite{AS} and strengthened to a ``strong divisibility property'' by Ahlgren and Kim \cite[Theorem 1, pp. 2]{AK}, conjectured earlier by Andrews and Sellers. Later, Ahlgren, Kim and Lovejoy \cite{AKL} obtained a strong divisibility property for two generic families of $q$-hypergeometric series, which, like the Kontesevich-Zagier series are also strange.

Consider the \textit{Kontsevich-Zagier} series associated to the family of torus knots $T(3,2^t)$, $t\geq 2$, as follows:
\begin{eqnarray}\label{KZ}
\mathscr{F}_t(q)&:=&(-1)^{h''(t)}q^{-h'(t)}\sum_{n\geq 0}(q)_n\sum_{3\sum_{\ell=1}^{m(t)-1}j_\ell\ell\equiv 1\;(\text{mod}\;m(t))}q^{\frac{-a(t)+\sum_{\ell=1}^{m(t)-1}j_\ell\ell}{m(t)}+\sum_{\ell=1}^{m(t)-1}\binom{j_\ell}{2}}\notag\\&&\times \sum_{k=0}^{m(t)-1}\prod_{\ell=1}^{m(t)-1}\binom{n+I(\ell\leq k)}{j_\ell}_q
\end{eqnarray}
where 
\begin{eqnarray*}
h''(t)=\left\{\begin{array}{cc}
\frac{2^t-1}{3},&\mbox{if}\;t\;\mbox{is even},\\
\frac{2^t-2}{3},&\mbox{if}\;t\;\mbox{is odd},
\end{array}\right.\hspace{0.5cm}
h'(t)=\left\{\begin{array}{cc}
\frac{2^t-4}{3},&\mbox{if}\;t\;\mbox{is even},\\
\frac{2^t-5}{3},&\mbox{if}\;t\;\mbox{is odd},
\end{array}\right.\hspace{0.5cm}
a(t)=\left\{\begin{array}{cc}
\frac{2^{t-1}+1}{3},&\mbox{if}\;t\;\mbox{is even},\\
\frac{2^{t}+1}{3},&\mbox{if}\;t\;\mbox{is odd},
\end{array}\right.
\end{eqnarray*}
$m(t)=2^{t-1}$, $I(*)$ is the characteristic function and 
\begin{eqnarray*}
\binom{n}{k}_q=\dfrac{(q)_n}{(q)_{n-k}(q)_k}
\end{eqnarray*}
is the $q$-binomial coefficient. We note here that the $t=1$ case yields
\begin{eqnarray}\label{sKV}
\mathscr{F}_1(q)=F(q),
\end{eqnarray}
since the two inner sums in the right-hand side of (\ref{KZ}) are empty which by convention can be assumed to be $1$. Thus $\mathscr{F}_t(q)$ is a generalized Kontsevich-Zagier series and as with the original Kontsevich-Zagier series, $\mathscr{F}_t(q)$ converges in a similar manner. 

Also we have (see \cite[Eq. 1.5, pp. 3]{Be})
\begin{eqnarray}\label{Jones}
\zeta_N^{2^t-1}\mathscr{F}_t(\zeta_N)=J_{N}(T(3,2^t);\zeta_N)
\end{eqnarray}
where for a knot $K$, $J_N(K;q)$ is the usual colored Jones polynomial, normalized to be $1$ for the unknot. It is a knot invariant and thus plays a leading role in many open problems in quantum topology (for more details, see \cite{Be}). Thus, in view of (\ref{sKV}) and (\ref{Jones}) we have
\begin{eqnarray}
\zeta_NF(\zeta_N)=J_N(T(3,2);\zeta_N).
\end{eqnarray}
Thus it is of much interest to study congruences of coefficients for the more general $q$-series $\mathscr{F}_t(q)$. We define
\begin{eqnarray}
\mathscr{F}_t(1-q)=:\sum_{n\geq 0}\xi_t(n)q^n.
\end{eqnarray}
For natural numbers $m$ and $t\geq 2$ and the periodic function
\begin{eqnarray}
\chi_t(n)=\chi_{3\cdot 2^{t+1}}(n):=\left\{
\begin{array}{ccc}
    1 & \mbox{if}\;n\equiv 2^{t+1}-3, 3+2^{t+2}\;(\mbox{mod}\;3\cdot 2^{t+1}), \\
    -1 & \mbox{if}\;n\equiv 2^{t+1}+3, 2^{t+2}-3\;(\mbox{mod}\;3\cdot 2^{t+1}),\\
    0&\mbox{otherwise},
\end{array}\right.
\end{eqnarray}
we define the set 
\begin{eqnarray}
S_{t,\chi_t}(m):=\left\{0\leq j\leq m-1: j\equiv \dfrac{n^2-(2^{t+1}-3)^2}{3\cdot 2^{t+2}}\;(\mbox{mod}\;m)\;\mbox{where}\;\chi_t(n)\neq 0\right\}.
\end{eqnarray}
In \cite{Be}, the following result was proved for the \textit{generalized Fishburn numbers} $\xi_t(n)$:
\begin{theorem}\label{BeO}
For a prime $p\geq 5$, let $j\in\{1,2,3,\cdots p-1-S_{t,\chi_t}(p)\}$. Then 
\begin{eqnarray*}
\xi_t(p^\ell n-j)\equiv 0\;(\emph{mod}\;p^\ell)
\end{eqnarray*}
for all natural numbers $\ell, n$ and $t\geq 2$. 
\end{theorem}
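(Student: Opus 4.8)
The plan is to adapt the prime-power method developed by Garvan and Straub for the ordinary Fishburn numbers, feeding it the torus-knot data encoded in $\chi_t$. First I would record the elementary but essential observation that $(1-q;1-q)_n=\prod_{k=1}^{n}\bigl(1-(1-q)^k\bigr)=n!\,q^n+O(q^{n+1})$, so that after extracting the invertible prefactor $(1-q)^{-h'(t)}$ (which lies in $\mathbb{Z}[[q]]$) and noting that the inner double sum contributes only integer polynomials in $q$ under $q\mapsto 1-q$, the series $\mathscr{F}_t(1-q)=\sum_n\xi_t(n)q^n$ is a well-defined element of $\mathbb{Z}[[q]]$. Because $(1-q;1-q)_n$ vanishes to order $n$, each $\xi_t(m)$ is determined by the finitely many terms with $n\le m$; to control $\xi_t(m)\pmod{p^\ell}$ I would pass to the $p^\ell$-dissection, organizing the truncated partial sums by the residue of the exponent modulo $p^\ell$.

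Next I would bring in the Hikami-type strange identity for $\mathscr{F}_t$, which is what motivates the definitions of $\chi_t$ and $S_{t,\chi_t}$. Up to the normalizing prefactor it reads
\[
\mathscr{F}_t(q)\doteq \sum_{\chi_t(n)\neq 0} n\,\chi_t(n)\,q^{\frac{n^2-(2^{t+1}-3)^2}{3\cdot 2^{t+2}}},
\]
where $\doteq$ denotes agreement to all orders at each root of unity. The exponents here are exactly the quantities occurring in the definition of $S_{t,\chi_t}(m)$, so the support of this partial theta series modulo $p$ is precisely $S_{t,\chi_t}(p)$. I would use this identity to transfer the arithmetic question away from the intractable $q$-hypergeometric form of $\mathscr{F}_t$ and onto the theta side, whose exponents are governed by quadratic residues of $n$ subject to the support conditions on $\chi_t$.

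The heart of the argument is the strong divisibility property for the dissections of the partial sums, in the spirit of Ahlgren--Kim and Ahlgren--Kim--Lovejoy. Either by verifying that $\mathscr{F}_t$ fits their generic $q$-hypergeometric template, or by proving the property directly by induction on $\ell$ using the behavior of $(q)_{p^\ell n}$ modulo $p^\ell$, I would establish that in the $p^\ell$-dissection of $\mathscr{F}_t(1-q)$ the components attached to residue classes outside (a lift of) $S_{t,\chi_t}(p)$ are divisible by $p^\ell$. Translating this divisibility through $q\mapsto 1-q$ shows that $\xi_t(m)\equiv 0\pmod{p^\ell}$ whenever $m\bmod p^\ell$ avoids the theta support. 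A quadratic-residue computation of $\tfrac{n^2-(2^{t+1}-3)^2}{3\cdot 2^{t+2}}\bmod p$ then pins down exactly which residues are excluded and shows that the indices $p^\ell n-j$ with $1\le j\le p-1-S_{t,\chi_t}(p)$ all fall outside it, giving the claimed congruence.

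I expect the main obstacle to be establishing the strong divisibility property at \emph{full prime-power strength} for $\mathscr{F}_t$: the extra prefactor $(1-q)^{-h'(t)}$ and the inner double $q$-binomial sum make the series genuinely more intricate than $F(q)$, so one must carefully control how these interact with the $p^\ell$-dissection and the $(1-q)$-adic valuations, presumably by relating $\mathscr{F}_t$ back to $F$ or by a dedicated induction. The secondary difficulty is the combinatorial bookkeeping of the theta exponents modulo $p$ needed to determine $S_{t,\chi_t}(p)$ precisely and thereby justify the exact range $1\le j\le p-1-S_{t,\chi_t}(p)$ of excluded residues.
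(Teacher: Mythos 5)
Your skeleton --- $p$-dissect the truncated series, invoke the Ahlgren--Kim--Lovejoy divisibility for the residue classes singled out by the partial theta function attached to $\chi_t$, then substitute $q\mapsto 1-q$ --- matches the actual argument (the paper quotes this theorem from \cite{Be} and reproves it as the case $N=1$, $r=1$, $s=0$ of Theorem \ref{main2}). But two essential mechanisms are missing. First, the upgrade from mod $p$ to mod $p^\ell$ does not come from a ``$p^\ell$-dissection'' whose off-support components are divisible by $p^\ell$; no such statement is proved or needed, and trying to prove one is the wrong direction. What is actually used is the ordinary $p$-dissection $\mathscr{F}_t(q;pn-1)=\sum_i q^i\mathcal{A}_{p,t}(pn-1,i,q^p)$ together with the purely polynomial divisibility $(q;q)_{\lfloor n/p\rfloor}\mid\mathcal{A}_{p,t}(pn-1,i,q)$ for $i\notin S_{t,\chi_t}(p)$ (Lemma \ref{Scott}, from \cite{AKL} plus the strange identity of \cite{Be}); the prime-power strength then enters through Straub's observation (Lemma \ref{higherdiv}) that modulo $p^\lambda$ one has $(1-(1-q)^{p})^m=O(q^{\lambda-1+p(m-\lambda-1)})$, so after $q\mapsto 1-q$ the off-support components acquire arbitrarily large $q$-adic order modulo $p^\lambda$ as $n\to\infty$. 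You correctly flag this as the main obstacle but do not supply the idea that resolves it.

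Second, and more seriously, you never treat the components with $i\in S_{t,\chi_t}(p)$. These are not divisible by anything; they contribute terms of the form $c_a(1-q)^{ap+i}$, and one must show that the coefficient of $q^{p^\ell n-j}$ there, namely $\pm c_a\binom{ap+i}{p^\ell n-j}$, vanishes modulo $p^\ell$. This is Lemma \ref{St1}, an application of Kummer's theorem: $p^\ell n-j$ has base-$p$ digits $(p-j,p-1,\dots,p-1,\dots)$, a carry occurs in the bottom digit precisely when $j<p-i$, and that carry then propagates through the next $\ell-1$ maximal digits, giving $p$-adic valuation at least $\ell$. This is exactly where the range $1\le j\le p-1-\max S_{t,\chi_t}(p)$ comes from. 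Your proposed justification of that range --- that the indices $p^\ell n-j$ ``fall outside the theta support'' modulo $p$ --- is not the operative criterion: it would predict congruences for every $j$ with $p-j\notin S_{t,\chi_t}(p)$, a strictly larger set in general, and such congruences are false. Without the Kummer step the on-support terms are uncontrolled and the proof does not close.
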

The aim of this article is to obtain prime power congruences for coefficients in the $(\zeta_N-q)$ expansions of $F(q)$ and $\mathscr{F}_t(q)$, where for $N\in\mathbb{N}$, $\zeta_N$ is an $N$th root of unity. More generally, we obtain prime power congruences for the coefficients in the $q$-expansions of $(\zeta_N-q)^sF((\zeta_N-q)^r)$ and $(\zeta_N-q)^s\mathscr{F}_t((\zeta_N-q)^r)$ for certain choices of integers $r$ and $s$. 

The rest of the paper is organized as follows. In Section \ref{NC}, we introduce some notations and conventions. In Section \ref{MR}, we state our main results and also discuss a few important remarks. In Section \ref{PR}, we prove some basic results and also record some preliminaries from \cite{AKL, Be, G, Straub}. Finally, in Section \ref{PM} we give the proofs of the main results.
\section{Notations and Conventions}\label{NC}
Throughout $p$ will denote a prime $\geq 5$, $t$ a natural number $\geq 2$. Let $r, s$ be any integers. For a natural number $N$, let $\zeta_N$ be an $N$th root of unity. Define the coefficients $\xi_{r,N}(n)$ and $\xi_{r,N,t}(n)$ by
\begin{eqnarray}\label{gFT1}
F((\zeta_N-q)^r)=:\sum_{n\geq 0}\xi_{r,N}(n)q^n,\hspace{0.5cm}\mathscr{F}_t((\zeta_N-q)^r)=:\sum_{n\geq 0}\xi_{r,N,t}(n)q^n
\end{eqnarray}
and the coefficients $\xi_{r,s,N}(n)$ and $\xi_{r,s,N,t}(n)$ by
\begin{eqnarray}\label{gFT2}
(\zeta_N-q)^sF((\zeta_N-q)^r)=:\sum_{n\geq 0}\xi_{r,s,N}(n)q^n,\hspace{0.5cm}
(\zeta_N-q)^s\mathscr{F}_t((\zeta_N-q)^r)=:\sum_{n\geq 0}\xi_{r,s,N,t}(n)q^n.
\end{eqnarray}
Then (\ref{gFT1}) and (\ref{gFT2}) imply that
\begin{eqnarray}
\xi_{r,s,N}(n)=\sum_{j=0}^s(-1)^j\zeta_N^{s-j}\binom{s}{j}\xi_{r,N}(n-j),\hspace{0.3cm}\xi_{r,s,N,t}(n)=\sum_{j=0}^s(-1)^j\zeta_N^{s-j}\binom{s}{j}\xi_{r,N,t}(n-j).
\end{eqnarray}
It is clear that $\xi_{1,0,1,t}(n)=\xi_{t}(n)$. Define the following sets:
\begin{eqnarray}
S(p,r,s)&:=&\left\{0\leq j\leq p-1: j-s\equiv r\cdot\dfrac{n(3n-1)}{2}\;(\mbox{mod}\;p)\right\},\notag\\
S^*(p,r,s)&:=&\left\{0\leq j\leq p-1: j-s\equiv r\cdot\dfrac{n(3n-1)}{2}\;(\mbox{mod}\;p),\;\; 24(j-s)\not\equiv -r\;(\mbox{mod}\;p)\right\},\notag\\
S_{t,\chi_t}(p,r,s)&:=&\left\{0\leq j\leq p-1: j-s\equiv r\cdot\dfrac{n^2-(2^{t+1}-3)^2}{3\cdot 2^{t+2}}\;(\mbox{mod}\;p)\;\mbox{where}\;\chi_t(n)\neq 0\right\},\notag\\
S_{t,\chi_t}^*(p,r,s)&:=&\Bigg\{j\in S_{t,\chi_t}(p,r,s): 3\cdot 2^{t+2}(j-s)\not\equiv -r\cdot (2^{t+1}-3)^{2}\;(\mbox{mod}\;p)\Bigg\}.
\end{eqnarray}
Next, let $b_{n,t}(\zeta_p)$ and $c_{n,t}(\zeta_p)$ be the coefficients in the power series expansions of the following: 
\begin{eqnarray}\label{coeff1}
e^{\frac{-u(2^{t+1}-3)^2}{3\cdot 2^{t+2}}}\mathscr{F}_t(\zeta_p e^{-u})=\sum_{n=0}^\infty \dfrac{c_{n,t}(\zeta_p)}{n!}\left(\dfrac{u}{3\cdot 2^{t+2}}\right)^n,\hspace{0.5cm}\mathscr{F}_t(\zeta_p e^{-u})=\sum_{n=0}^\infty \dfrac{b_{n,t}(\zeta_p)}{n!}u^n.
\end{eqnarray}
Let $F(q;N)$ and $\mathscr{F}_t(q;N)$ denote the truncation of $F(q)$ and $\mathscr{F}_t(q)$, respectively at a height $N$ defined by 
\begin{eqnarray}\label{trunc}
\mathscr{F}_t(q;N)&=&(-1)^{h''(t)}q^{-h'(t)}\sum_{n= 0}^N(q)_n\sum_{3\sum_{\ell=1}^{m(t)-1}j_\ell\ell\equiv 1\;(\text{mod}\;m(t))}q^{\frac{-a(t)+\sum_{\ell=1}^{m(t)-1}j_\ell\ell}{m(t)}+\sum_{\ell=1}^{m(t)-1}\binom{j_\ell}{2}}\notag\\&&\times \sum_{k=0}^{m(t)-1}\prod_{\ell=1}^{m(t)-1}\binom{n+I(\ell\leq k)}{j_\ell}_q,\notag\\
F(q;N)&=&\sum_{n=0}^N(q)_n.
\end{eqnarray}
Consider the $p$-dissection of $F(q;N)$ and $\mathscr{F}_t(q;N)$ as follows:
\begin{eqnarray}\label{p-dis}
F(q;N)=\sum_{i=0}^{p-1}q^iA_{p}(N,i,q^p),\hspace{0.5cm}\mathscr{F}_t(q;N)=\sum_{i=0}^{p-1}q^i\mathcal{A}_{p,t}(N,i,q^p).
\end{eqnarray}
Also, consider the following:
\begin{eqnarray}\label{sum1}
A_p(pn-1,i,1-q)=\sum_{k\geq 0}\alpha(p,n,i,k)q^k,\hspace{0.5cm}\mathcal{A}_{p,t}(pn-1,i,1-q)=\sum_{k\geq 0}\alpha_t(p,n,i,k)q^k.
\end{eqnarray}
Define the following set of primes:\\ \\
\textbf{(I)} \hspace{0.1cm}$\mathcal{P}_1$ is the set of primes satisfying
\begin{eqnarray*}\label{P1}
p\equiv 1\;(\mbox{mod}\;3\cdot 2^{t+1}).
\end{eqnarray*}
\textbf{(II)} \hspace{0.1cm}$\mathcal{P}_2$ is the set of primes satisfying
\begin{eqnarray*}\label{P2}
p\equiv 3\cdot 2^{t+1}-1\;(\mbox{mod}\;3\cdot 2^{t+1}).
\end{eqnarray*}
\textbf{(III)} \hspace{0.1cm}$\mathcal{P}_3$ is the set of primes satisfying
\begin{eqnarray*}\label{P3}
p\equiv r_1(t)\;\;(\mbox{mod}\;3\cdot 2^{t+1}),\;r_{1}(t)=\left\{\begin{array}{cc}2^{t+1}-1,& \mbox{if}\;t\;\mbox{is even},\\
2^{t+1}+1,& \mbox{if}\;t\;\mbox{is odd}.
\end{array}\right.
\end{eqnarray*}
\textbf{(IV)} \hspace{0.1cm}And $\mathcal{P}_4$ is the set of primes satisfying
\begin{eqnarray*}\label{P4}
p\equiv r_2(t)\;\;(\mbox{mod}\;3\cdot 2^{t+1}),\;r_{2}(t)=\left\{\begin{array}{cc}2^{t+2}+1,& \mbox{if}\;t\;\mbox{is even},\\
2^{t+2}-1,& \mbox{if}\;t\;\mbox{is odd}.
\end{array}\right.
\end{eqnarray*}
Recall that every $n\in\mathbb{Q}$ has a unique $p$-adic expansion 
\begin{eqnarray}
n=\sum_{k=\nu_p(n)}^\infty n_kp^{k},
\end{eqnarray}
where $n_k\in\{0,1,\cdots, p-1\}$ and $\nu_p(n)$ is the $p$-adic valuation of $n$. We denote by $\mbox{digit}_k(n;p)=n_k.$ Finally, if $r_{\zeta_N}\in\mathbb{Z}[\zeta_N]$ and $\lambda\in\mathbb{N}$, then by $r_{\zeta_N}\equiv 0\;(\mbox{mod}\;p^\lambda)$ we mean that there exists an $r'_{\zeta_N}\in\mathbb{Z}[\zeta_N]$ such that $r_{\zeta_N}=p^\lambda\cdot r'_{\zeta_N}$. 
\section{Main results}\label{MR}
\begin{theorem}\label{main1}
Let $p\geq 5$ be a prime, $N\in\mathbb{N}$ and $r, s\in\mathbb{Z}$ be such that $p\nmid r$. Then, for all $\lambda, m\in\mathbb{N}$ and $j\in\left\{1,2,\cdots, p-1-\max{S(p,r,s)}\right\}$ we have
\begin{eqnarray*}
\xi_{r,s,N}(p^\lambda m-j)\equiv 0\;(\emph{mod}\;p^\lambda).
\end{eqnarray*}
Further, if $N\mid rp$ and the triple $(p,r,s)$ satisfies 
\begin{eqnarray}\label{c1}
\emph{digit}_1(s-r/24;p)< p-1,
\end{eqnarray}
then the set $S(p,r,s)$ can be replaced by $S^*(p,r,s)$. 
\end{theorem}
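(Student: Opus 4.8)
The plan is to convert the assertion into a statement about a single explicit polynomial, apply the $p$-dissection of \eqref{p-dis}, and then read off $p^\lambda$-divisibility from the strong divisibility property of Ahlgren--Kim \cite{AK} and Ahlgren--Kim--Lovejoy \cite{AKL} (recorded in Section \ref{PR}) by a Kummer-type valuation estimate. Put $e=N/\gcd(N,r)$. Since $1-(\zeta_N-q)^{rk}$ vanishes at $q=0$ exactly when $e\mid k$, the $n$th summand of $F((\zeta_N-q)^{r})=\sum_{n\ge 0}\prod_{k=1}^{n}\bigl(1-(\zeta_N-q)^{rk}\bigr)$ is $O(q^{\lfloor n/e\rfloor})$, so for admissible $M$ (say $M\ge e\,p^{\lambda}m$) one has $\xi_{r,s,N}(p^{\lambda}m-j)=[q^{p^{\lambda}m-j}]\,(\zeta_N-q)^{s}F((\zeta_N-q)^{r};M)$. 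Substituting $Q=(\zeta_N-q)^{r}$ into $F(Q;M)=\sum_{i=0}^{p-1}Q^{i}A_{p}(M,i,Q^{p})$ gives
\begin{equation*}
(\zeta_N-q)^{s}F((\zeta_N-q)^{r};M)=\sum_{i=0}^{p-1}(\zeta_N-q)^{\,s+ri}\,A_{p}\bigl(M,i,(\zeta_N-q)^{rp}\bigr),
\end{equation*}
reducing everything to the $p$-adic size of the coefficients of the pieces $A_{p}(M,i,(\zeta_N-q)^{rp})$.

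The local object is $\Delta:=1-(\zeta_N-q)^{rp}$. Its coefficients of $q^{1},\dots,q^{p-1}$ are all divisible by $p$, while the coefficient of $q^{p}$ equals $\binom{rp}{p}\zeta_N^{rp-p}\equiv r\,\zeta_N^{rp-p}\pmod p$, a $p$-adic unit (this is exactly where $p\nmid r$ enters). Hence $\Delta=q^{p}\psi(q)+p\,\phi(q)$ with $\psi(0)$ a unit and $\phi$ divisible by $q$; when moreover $N\mid rp$ the constant term $1-\zeta_N^{rp}$ vanishes and $\Delta\in(p,q^{p})$. I would then invoke strong divisibility: for the non-exceptional indices $i$, $A_{p}(M,i,\,\cdot\,)$ is divisible by $(1-\,\cdot\,)^{c}$ with $c\asymp M/p$, so $A_{p}(M,i,(\zeta_N-q)^{rp})$ is a multiple of $\Delta^{c}$. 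Choosing an admissible $M$ for which $p^{\lambda-1}\mid c$ and $c\ge p^{\lambda-1}m$ (admissible heights are cofinal), expand $\Delta^{c}=\sum_{a}\binom{c}{a}p^{a}\phi^{a}q^{p(c-a)}\psi^{\,c-a}$ and extract the coefficient of $q^{p^{\lambda}m-j}$. The term $a=0$ has $q$-valuation $pc\ge p^\lambda m$, too high to contribute, so only $a\ge 1$ survive; and since $p^{\lambda-1}\mid c$, Kummer's theorem gives $\nu_{p}\!\bigl(\binom{c}{a}\bigr)\ge(\lambda-1)-\nu_{p}(a)$, whence $\nu_{p}\!\bigl(\binom{c}{a}p^{a}\bigr)\ge\lambda-1-\nu_{p}(a)+a\ge\lambda$ because $a-\nu_{p}(a)\ge 1$. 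Thus every non-exceptional $i$ contributes a multiple of $p^{\lambda}$ in $\mathbb{Z}[\zeta_N]$.

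The residues $i$ at which strong divisibility degrades are the pentagonal ones, $i\equiv \tfrac{\ell(3\ell-1)}{2}\pmod p$, Euler's pentagonal number theorem being what governs the dissection of the partial sums $\sum_{n\le M}(q)_n$. Tracking how the prefactor $(\zeta_N-q)^{s+ri}$ supplies the residue class $-j\pmod p$ shows that such an $i$ can produce a term of degree $p^{\lambda}m-j$ escaping the divisibility of the previous step only when $j-s\equiv r\cdot\tfrac{\ell(3\ell-1)}{2}\pmod p$, i.e.\ when $j\in S(p,r,s)$. The hypothesis $j\le p-1-\max S(p,r,s)$ keeps the target residue clear of $S(p,r,s)$, so the exceptional terms never reach the coefficient, and $\xi_{r,s,N}(p^{\lambda}m-j)\equiv 0\pmod{p^{\lambda}}$ follows.

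Replacing $S$ by $S^{*}$ amounts to excising the single central node $\ell$ with $6\ell\equiv 1\pmod p$, equivalently $24(j-s)\equiv -r\pmod p$, at which the estimate above is exactly borderline; there one must show that the expected extra factor of $p$ still materializes, which happens precisely when no carry occurs in the governing $p$-adic sum. The condition \eqref{c1}, $\mathrm{digit}_{1}(s-r/24;p)<p-1$, is what forbids that carry, in the spirit of the digit conditions of Garvan \cite{G} and Straub \cite{Straub}. I expect the genuine difficulty to lie in two places. First, the clean engine above bites directly only when $N\mid rp$, since only then is $\Delta$ divisible by $q$; for general $N$ one has $\Delta(0)=1-\zeta_N^{rp}\neq 0$, so $\Delta$ is a $q$-adic unit, and I would instead run the argument modulo $p$ through the Frobenius congruence $(\zeta_N-q)^{rp}\equiv(\zeta_N^{p}-q^{p})^{r}\pmod p$ and lift to modulus $p^{\lambda}$ by induction on $\lambda$, the delicate point being uniform control of the $p$-adic valuations of the resulting coefficients in $\mathbb{Z}[\zeta_N]$. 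Second, the carry analysis at the central node underpinning $S^{*}$ is the most technical ingredient, and is exactly where the hypotheses $N\mid rp$ and \eqref{c1} are consumed.
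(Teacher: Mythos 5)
There is a genuine gap in your treatment of the non-exceptional residues for general $N$, and this is the core of the first (unconditional) part of the theorem, which must hold for \emph{every} $N\in\mathbb{N}$ with no relation between $N$ and $rp$. You invoke only divisibility of $A_p(pn-1,i,q)$ by a power of $(1-q)$, so after the substitution $q\mapsto(\zeta_N-q)^{rp}$ you are left with powers of $\Delta=1-(\zeta_N-q)^{rp}$; as you yourself note, $\Delta(0)=1-\zeta_N^{rp}\neq 0$ unless $N\mid rp$, so $\Delta$ is a unit in $\mathbb{Z}[\zeta_N][[q]]$ and large powers of it say nothing about the coefficient of $q^{p^{\lambda}m-j}$. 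Your proposed fallback (a Frobenius congruence plus induction on $\lambda$) is a one-sentence sketch, not an argument. The missing idea is to use the full strength of the Ahlgren--Kim strong divisibility: $A_p(pn-1,i,q)$ is divisible by $(q;q)_{n}$, hence by $(1-q^N)^{\lfloor n/N\rfloor}$, and after substitution the relevant base is $1-(\zeta_N-q)^{Nrp}$, whose constant term vanishes for \emph{every} $N$ because $\zeta_N^{Nrp}=1$. Writing $1-(\zeta_N-q)^{Nrp}=pq\,h_2(q)+q^p h_3(q)$ (Lemma \ref{higherdiv}) gives $(1-(\zeta_N-q)^{Nrp})^{m}\equiv O\bigl(q^{\lambda-1+p(m-\lambda-1)}\bigr)\pmod{p^{\lambda}}$, so for $n$ large these residues contribute nothing to the coefficient of $q^{p^{\lambda}m-j}$ modulo $p^{\lambda}$. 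Your Kummer computation $\nu_p\bigl(\binom{c}{a}p^a\bigr)\geq\lambda$ is a correct alternative for the special case $N\mid rp$, but it does not cover the general case the theorem asserts.

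Your handling of the exceptional residues $i\in S(p,1,0)$ is also too loose. You say the hypothesis $j\leq p-1-\max S(p,r,s)$ keeps the target residue ``clear'' so these terms ``never reach the coefficient''; in fact they do contribute to the coefficient of $q^{p^{\lambda}m-j}$, and what kills them is the congruence $\binom{a'p+i'}{p^{\lambda}m-j}\equiv 0\pmod{p^{\lambda}}$ for $0<j<p-i'$ (Lemma \ref{St1}, via Kummer's theorem), applied to each monomial $(\zeta_N-q)^{ap+ri+s}$ of $(\zeta_N-q)^{ri+s}A_p\bigl(pn-1,i,(\zeta_N-q)^{rp}\bigr)$ after observing that $p\nmid r$ forces $ri+s\equiv i'\pmod p$ for some $i'\in S(p,r,s)$. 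Your outline of the $S^{*}$ refinement --- Straub's exact dissection at the residue $i_0$ with $24i_0\equiv-1\pmod p$, the hypothesis $N\mid rp$ to discard the error term, and the digit condition \eqref{c1} forbidding a carry --- is consistent with what the paper does, but it remains a plan rather than a proof; the reduction to $\xi_{p^2r,\,ri_0+s+rp\lfloor p/24\rfloor,\,N}\equiv 0\pmod{p^{\lambda-1}}$ and the attendant carry analysis are exactly the steps that need to be written out.
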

\begin{theorem}\label{main2}
Let $t\geq 2$ and $p\geq 5$ be a prime. Let $N\in\mathbb{N}$ and $r, s\in\mathbb{Z}$ be such that $p\nmid r$. Set $a=(2^{t+1}-3)^2$ and $b=3\cdot 2^{t+2}$. Then, for all $\lambda, m\in\mathbb{N}$ and $j\in\left\{1,2,\cdots, p-1-\max{S_{t,\chi_t}(p,r,s)}\right\}$ we have
\begin{eqnarray*}
\xi_{r,s,N,t}(p^\lambda m-j)\equiv 0\;(\emph{mod}\;p^\lambda).
\end{eqnarray*}
Moreover, let $p\in\mathcal{P}_1\cup\mathcal{P}_2\cup\mathcal{P}_3\cup\mathcal{P}_4$ where $\mathcal{P}_1, \mathcal{P}_2, \mathcal{P}_3$ and $\mathcal{P}_4$ are as in \emph{\textbf{(I)} - \textbf{(IV)}}. If $N\mid rp$ and the triple $(p,r,s)$ satisfies
\begin{eqnarray}\label{c4}
\emph{digit}_1(s-ar/b;p)<p-1,
\end{eqnarray}
then the set $S_{t,\chi_t}(p,r,s)$ can be replaced by $S^{*}_{t,\chi_t}(p,r,s)$.  
\end{theorem}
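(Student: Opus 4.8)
The plan is to establish Theorem~\ref{main2} in lockstep with Theorem~\ref{main1}, replacing $F$ by $\mathscr{F}_t$ throughout and substituting the torus-knot strong divisibility property---the mechanism behind Theorem~\ref{BeO}, due to \cite{AKL, Be}---for the Ahlgren--Kim property used for $F$. The first move is to transfer the question to the behaviour of $\mathscr{F}_t$ at a root of unity. Writing $x=\zeta_N-q=\zeta_N e^{-u}$ with $u=-\log(1-q/\zeta_N)$, we have $(\zeta_N-q)^r=\zeta_N^{\,r}e^{-ru}$ and $(\zeta_N-q)^s=\zeta_N^{\,s}e^{-su}$, so that
\[
(\zeta_N-q)^s\mathscr{F}_t\big((\zeta_N-q)^r\big)=\zeta_N^{\,s}e^{-su}\sum_{k\ge 0}\frac{b_{k,t}(\zeta_N^{\,r})}{k!}\,(ru)^k,
\]
where $b_{k,t}(\zeta_N^{\,r})$ is defined by the analogue of \eqref{coeff1} at the root of unity $\zeta_N^{\,r}$. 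Expanding $e^{-su}$ and the powers of $u=-\log(1-q/\zeta_N)$ and reading off the coefficient of $q^n$ then expresses $\xi_{r,s,N,t}(n)$ entirely in terms of the radial-limit coefficients $b_{k,t}(\zeta_N^{\,r})$ (equivalently, after renormalisation, the $c_{k,t}$); the translation induced by the factor $e^{-su}$ is precisely the ``$j-s$'' occurring in the definition of $S_{t,\chi_t}(p,r,s)$ and in \eqref{c4}, consistent with the elementary relation between $\xi_{r,s,N,t}$ and $\xi_{r,N,t}$ recorded after \eqref{gFT2}.

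The arithmetic engine, to be proved in Section~\ref{PR} following \cite{AKL, Be, Straub}, is a $p$-adic divisibility property for these coefficients. The partial-theta (quantum-modular) description of $\mathscr{F}_t$ exhibits $b_{k,t}$ as $p$-integral and isolates the theta spectrum $\{(n^2-a)/b \bmod p:\chi_t(n)\neq 0\}$, with $a=(2^{t+1}-3)^2$ and $b=3\cdot 2^{t+2}$, as the sole obstruction to high divisibility by $p$. I would feed these bounds through the standard estimate for $\nu_p(k!)$ and for the coefficients of $(-\log(1-q/\zeta_N))^k$; the upshot is that $\xi_{r,s,N,t}(p^\lambda m-j)$ gains a factor $p^\lambda$ as soon as the residue $j$ misses the shifted spectrum, that is, for $j$ in the window $\{1,\dots,p-1-\max S_{t,\chi_t}(p,r,s)\}$. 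This gives the first assertion for arbitrary $N$.

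For the refinement I would use that $N\mid rp$ is equivalent to $\zeta_N^{\,r}=\zeta_p^{\,c}$ (with $c\equiv rp/N \pmod p$) being a $p$-th root of unity, so that the expansion point is $\zeta_p$ and the sharper coefficients $b_{n,t}(\zeta_p),c_{n,t}(\zeta_p)$ of \eqref{coeff1} become available; restricting $p$ to $\mathcal{P}_1\cup\mathcal{P}_2\cup\mathcal{P}_3\cup\mathcal{P}_4$ fixes the residue of $p$ modulo $3\cdot 2^{t+1}$ required to evaluate the theta sum at $\zeta_p$. The single residue deleted in passing from $S_{t,\chi_t}(p,r,s)$ to $S^{*}_{t,\chi_t}(p,r,s)$, namely $b(j-s)\equiv -ra\pmod p$, is the central theta exponent arising from $n\equiv 0\pmod p$; this is exactly the exponent stripped off by the renormalisation $e^{-ua/b}$ defining $c_{n,t}$. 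The digit hypothesis \eqref{c4}, which constrains $\mathrm{digit}_1(s-ar/b;p)$, is what guarantees that the coefficient attached to this exponent acquires one additional factor of $p$, so that it may be removed from the obstruction set.

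I expect the main obstacle to be the quantitative strong divisibility: determining precisely how the partial-theta decomposition bounds $\nu_p(c_{k,t})$ both away from and at the central exponent, and then propagating these bounds through the factorial denominators and the logarithmic substitution $u=-\log(1-q/\zeta_N)$ so as to land exactly at the modulus $p^\lambda$ rather than a smaller power. A secondary point requiring care is that the entire argument must run in $\mathbb{Z}[\zeta_N]$ in the sense fixed at the end of Section~\ref{NC}; this is why the sharper set $S^{*}_{t,\chi_t}(p,r,s)$ is available only under $N\mid rp$, which returns the computation to the cyclotomic field $\mathbb{Q}(\zeta_p)$ where the refined estimates for $b_{n,t}(\zeta_p)$ and $c_{n,t}(\zeta_p)$ live.
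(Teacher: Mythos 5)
Your proposal correctly names the high-level ingredients (the torus-knot strong divisibility property, the special residue $i_0$ with $b\,i_0\equiv -a\pmod p$, the role of $N\mid rp$ and of the digit condition), but the route you actually propose --- expressing $\xi_{r,s,N,t}(n)$ directly in terms of the radial-limit coefficients $b_{k,t}$ via $u=-\log(1-q/\zeta_N)$ and then tracking $p$-adic valuations of the $b_{k,t}$, $c_{k,t}$ and $k!$ --- is not the paper's argument and has a genuine gap. The paper never converts the whole series into asymptotic coefficients. It works with the $p$-dissection \eqref{p-dis} of the truncation, $\mathscr{F}_t((\zeta_N-q)^r;pn-1)=\sum_{i=0}^{p-1}(\zeta_N-q)^{ri}\mathcal{A}_{p,t}(pn-1,i,(\zeta_N-q)^{rp})$, and treats the residue classes separately: for $i\notin S_{t,\chi_t}(p,1,0)$, Lemma \ref{Scott} gives $(1-q^N)^{\lfloor n/N\rfloor}\mid\mathcal{A}_{p,t}(pn-1,i,q)$ and Lemma \ref{higherdiv} converts this into $O(q^{p\lfloor n/N\rfloor-(p-1)(\lambda-1)})$ modulo $p^{\lambda}$; for $i\in S_{t,\chi_t}(p,1,0)$, the claim reduces to $\binom{a'p+i'}{p^{\lambda}m-j}\equiv 0\pmod{p^{\lambda}}$, i.e.\ to counting carries in base-$p$ addition (Lemma \ref{St1}). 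In particular the window $j\in\{1,\dots,p-1-\max S_{t,\chi_t}(p,r,s)\}$ enters through Kummer's theorem, not through the residue $j$ ``missing the shifted spectrum'' in a valuation bound on the $c_{k,t}$. Neither mechanism appears in your sketch, and the first part of the theorem cannot be reached without them.

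The step that concretely fails is the plan to ``feed these bounds through the standard estimate for $\nu_p(k!)$ and for the coefficients of $(-\log(1-q/\zeta_N))^k$.'' By \eqref{gf2} the coefficient of $q^n$ in $(-\log(1-q))^j/j!$ is $s_1(n,j,0)/n!$, which is not $p$-integral in general, so naive bookkeeping along this route loses powers of $p$ instead of producing the factor $p^{\lambda}$; moreover the strong divisibility of the dissection polynomials is a statement about the entire polynomials $\mathcal{A}_{p,t}(N,i,q)$ and cannot be recovered from the finitely many derivatives at $q=\zeta_p$ that the $b_{k,t}(\zeta_p)$ encode. The asymptotic coefficients do enter the paper's proof, but only in the tightly controlled setting of Lemma \ref{divcoeff}: for the single residue $i_0$, the linear system \eqref{lincom} is inverted by Garvan's Lemma \ref{Gar}, and the Stirling generating function \eqref{gf2} reassembles the $c_{k,t}(1)$ into the \emph{integral} identity $\mathcal{A}_{p,t}(pn-1,i_0,q)=\pm\, pq^{\lfloor ap/b\rfloor+\lfloor C_{a,b,p}/p\rfloor}\mathscr{F}_t(q^p;pn-1)+(1-q)^n(\cdots)$, with the sign controlled by Lemma \ref{signpre} and the hypothesis $p\in\mathcal{P}_1\cup\mathcal{P}_2\cup\mathcal{P}_3\cup\mathcal{P}_4$; the refinement then follows from Lemma \ref{xicon1} together with the digit condition \eqref{c4}. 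Without the dissection identity of Lemma \ref{divcoeff} and the two divisibility mechanisms above, the proof as proposed cannot be completed.
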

\begin{cor}\label{main3}
Let $p\geq 5$ be a prime. Let $N\in\mathbb{N}$ and $r, s\in\mathbb{Z}$ be such that $p\nmid r$. Then, for all $\lambda, m\in\mathbb{N}$ and $j\in\left\{1,2,\cdots, p-1-\max{S_{2,\chi_2}(p,r,s)}\right\}$ we have
\begin{eqnarray*}
\xi_{r,s,N,2}(p^\lambda m-j)\equiv 0\;(\emph{mod}\;p^\lambda).
\end{eqnarray*}
Moreover if $p\equiv \pm 1, \pm 7\;(\emph{mod}\;24)$, $N\mid rp$ and the triple $(p,r,s)$ satisfies
\begin{eqnarray}\label{c4}
\emph{digit}_1(s-25r/48;p)<p-1,
\end{eqnarray}
then the set $S_{2,\chi_2}(p,r,s)$ can be replaced by $S^{*}_{2,\chi_2}(p,r,s)$.
\end{cor}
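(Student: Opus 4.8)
The plan is to derive Corollary \ref{main3} directly as the specialization $t=2$ of Theorem \ref{main2}, so that the only substantive work is to verify that the constants and congruence conditions appearing in the theorem collapse to those stated in the corollary. No new analytic input should be needed.

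First I would substitute $t=2$ into the definitions $a=(2^{t+1}-3)^2$ and $b=3\cdot 2^{t+2}$ from Theorem \ref{main2}, obtaining $a=(2^3-3)^2=25$ and $b=3\cdot 2^4=48$. Consequently $ar/b=25r/48$, so the hypothesis $\text{digit}_1(s-ar/b;p)<p-1$ of Theorem \ref{main2} becomes precisely the condition $\text{digit}_1(s-25r/48;p)<p-1$ displayed in the corollary. The set $S_{t,\chi_t}(p,r,s)$ and its starred variant $S^{*}_{t,\chi_t}(p,r,s)$ likewise specialize verbatim to $S_{2,\chi_2}(p,r,s)$ and $S^{*}_{2,\chi_2}(p,r,s)$.

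Next I would check that the union of prime classes $\mathcal{P}_1\cup\mathcal{P}_2\cup\mathcal{P}_3\cup\mathcal{P}_4$ reduces to $p\equiv\pm 1,\pm 7\pmod{24}$ when $t=2$. Here $3\cdot 2^{t+1}=24$, and since $t=2$ is even we have $r_1(2)=2^{t+1}-1=7$ and $r_2(2)=2^{t+2}+1=17$. Thus $\mathcal{P}_1$ contributes $p\equiv 1$, $\mathcal{P}_2$ contributes $p\equiv 23\equiv -1$, $\mathcal{P}_3$ contributes $p\equiv 7$, and $\mathcal{P}_4$ contributes $p\equiv 17\equiv -7$, all modulo $24$; their union is exactly $\{p:p\equiv\pm 1,\pm 7\pmod{24}\}$, which is the hypothesis of the refinement in the corollary.

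With these identifications in hand, the first assertion of the corollary follows immediately from the first assertion of Theorem \ref{main2} at $t=2$, which carries no restriction on the residue class of $p$; and the refinement replacing $S_{2,\chi_2}(p,r,s)$ by $S^{*}_{2,\chi_2}(p,r,s)$ follows from the ``moreover'' clause once $p$ lies in the union above and the digit condition holds. There is no genuine obstacle here beyond bookkeeping of constants; the only point that warrants care is confirming that the four residue classes are distinct and jointly exhaust $\{\pm 1,\pm 7\}$ modulo $24$, so that no case of Theorem \ref{main2} is lost or double-counted, which the computation above settles.
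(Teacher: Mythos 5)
Your proposal is correct and is essentially identical to the paper's own (very brief) proof: the paper likewise obtains the corollary by setting $t=2$ in Theorem \ref{main2} and observing that $\mathcal{P}_1\cup\mathcal{P}_2\cup\mathcal{P}_3\cup\mathcal{P}_4$ becomes $p\equiv\pm1,\pm7\pmod{24}$. Your explicit verification of $a=25$, $b=48$, and the four residue classes $\{1,23,7,17\}$ is accurate and simply fills in the bookkeeping the paper leaves implicit.
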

\begin{remark}
We note here that Theorem 1.5 in \cite{Straub} is a special case ($N=1$) of Theorem \ref{main1}. Theorem 1.1 in \cite{Be} is a special case of Theorem \ref{main2} ($N=1, r=1, s=0$). Besides for primes $p\in\mathcal{P}_1\cup\mathcal{P}_2\cup\mathcal{P}_3\cup\mathcal{P}_4$, Theorem \ref{main2} improves Theorem 1.1 in \cite{Be} in the case $N=1, r=1, s=0$ for all $t\geq 2$, by replacing the set $S_{t,\chi_t}(p,r,s)$ by the smaller set $S^*_{t,\chi_t}(p,r,s)$.
\end{remark}
\begin{remark}\label{rem2}
As an example, take $(r,s,N)=(1,0,2)$. This triple corresponds to the coefficients in the $q$-series expansion of $F(-1-q)$. In view of Theorem \ref{main1}, we find
\begin{eqnarray}
\xi_{1,0,2}(5^\lambda m-1)\equiv \xi_{1,0,2}(5^\lambda m-2)&\equiv &0\;(\mbox{mod}\;5^\lambda),\notag\\
\xi_{1,0,2}(7^\lambda m-1)&\equiv & 0\;(\mbox{mod}\;7^\lambda),\notag\\
\xi_{1,0,2}(11^\lambda m-1)\equiv \xi_{1,0,2}(11^\lambda m-2)\equiv \xi_{1,0,2}(11^\lambda m-3)&\equiv & 0\;(\mbox{mod}\;11^\lambda),\notag\\
\xi_{1,0,2}(17^\lambda m-1)&\equiv & 0\;(\mbox{mod}\;17^\lambda),\notag\\
\xi_{1,0,2}(19^\lambda m-1)\equiv \xi_{1,0,2}(19^\lambda m-2)\equiv \xi_{1,0,2}(19^\lambda m-3)&\equiv & 0\;(\mbox{mod}\;19^\lambda),
\end{eqnarray}
which are exactly the same prime power congruences in \cite{Straub} for the coefficients in the $q$-series expansion of $F(1-q)$. In fact, as is easily seen, the first part of Theorem \ref{main1} implies that, for fixed choices of integers $r$ and $s$, the congruences for the coefficients $\xi_{r,s,N}(n)$ are independent of the choices of $N$ (or roots of unity, $\zeta_N$), since the set $S(p,r,s)$ is independent of $N$. However, the second part of Theorem \ref{main1} depends on the choice of $\zeta_N$. Thus, for the triple $(r,s,N)=(1,0,2)$, we do not get any additional congruences (contrary to the case $N=1$) than the ones obtained from the first part of the theorem. In fact, we find that
\begin{eqnarray*}
\xi_{1,0,2}(23\cdot 1-1)&=&-3374324885490973100341136883972043\not\equiv 0\;(\mbox{mod}\;23),\notag\\\xi_{1,0,2}(23\cdot 1-2)&=&-47914053901185858013549651979546\not\equiv 0\;(\mbox{mod}\;23)\notag\\
\xi_{1,0,2}(23\cdot 1-3)&=&184970580844281275291492442891\not\equiv 0\;(\mbox{mod}\;23),\notag\\
\xi_{1,0,2}(23\cdot 1-4)&=&37985301942246535793275853285\not\equiv 0\;(\mbox{mod}\;23),\notag\\
\xi_{1,0,2}(23\cdot 1-5)&=&1362966752518988604618378515\not\equiv 0\;(\mbox{mod}\;23).\notag
\end{eqnarray*}
We note here that these numbers are already very large. We also note that for the triple $(r,s,N)=(2,0,2)$, all the conditions in the second part of Theorem \ref{main2} are satisfied and we get the following additional congruences for coefficients of $F((1+q)^2)$, not obtained from the first part of the theorem. 
\begin{eqnarray*}
&\xi_{2,0,2}(7^\lambda m-3)\equiv 0\;(\mbox{mod}\;7^\lambda),&\notag\\
&\xi_{2,0,2}(11^\lambda m-1)\equiv \xi_{2,0,2}(11^\lambda m-2)\equiv 0\;(\mbox{mod}\;11^\lambda).&
\end{eqnarray*}
\end{remark}
\begin{remark}
For $t\geq 2$, as in Remark \ref{rem2}, the first part of Theorem \ref{main2} yields the same congruences irrespective of the choices of roots of unity. However, the second part of the theorem depends on the choices of roots of unity. As an example, let us look at Corollary \ref{main3}. This corresponds to the triple $(r,s,N,t)=(1,0,1,2)$ in Theorem \ref{main2}. We see from the second part of Corollary \ref{main3} that $p\in\mathcal{P}_1\cup\mathcal{P}_2\cup\mathcal{P}_3\cup\mathcal{P}_4$ if and only if $p\equiv\pm 1,\;\pm 7\;(\mbox{mod}\;24)$. Also, the conditions $N\mid rp$ and $\mbox{digit}_1(-25/48;p)=\lfloor25p/48\rfloor<p-1$ are satisfied, and we find the following additional congruences modulo $23$, not obtained from Theorem \ref{BeO} in the case $t=2$. 
\begin{eqnarray*}
&\scalebox{0.95}{$\xi_{1,0,1,2}(23^\lambda m-5)\equiv \xi_{1,0,1,2}(23^\lambda m-4) \equiv \xi_{1,0,1,2}(23^\lambda m-3)\equiv\xi_{1,0,1,2}(23^\lambda m-2)\equiv \xi_{1,0,1,2}(23^\lambda m-1)\equiv 0\;(\mbox{mod}\;23^\lambda)$}.&
\end{eqnarray*}
\end{remark}
\section{Preliminary results}\label{PR}
\begin{lem}\label{Coeffs}
Let $N_t=3\cdot 2^{t+1}\cdot p$. Then we have
\begin{eqnarray*}
c_{n,t}(\zeta_p)&=&\dfrac{(-1)^n N_t^{2n+1}}{2n+2}\sum_{m=1}^{N_t/2}\zeta_p^{\frac{m^2-(2^{t+1}-3)^2}{3\cdot 2^{t+2}}}\chi_t(m)\;B_{2n+2}(m/N_t),\\
b_{n,t}(\zeta_p)&=&\dfrac{(2^{t+1}-3)^{2n}}{(3\cdot 2^{t+2})^n}\sum_{j=0}^n\binom{n}{j}\sum_{i=0}^{p-1}\gamma_t(j,i)\zeta_p^i,
\end{eqnarray*}
where $B_n(x)$ is the $n$th Bernoulli polynomial and 
\begin{eqnarray*}
\gamma_t(j,i)=\dfrac{(-1)^jN_t^{2j+1}}{(2j+2)(2^{t+1}-3)^{2j}}\sum_{\substack{1\leq m\leq N_t/2\\(m^2-(2^{t+1}-3)^2)/3\cdot 2^{t+2}\equiv i\;(\emph{mod}\;p)}}\chi_t(m)B_{2j+2}(m/N_t).
\end{eqnarray*}
\end{lem}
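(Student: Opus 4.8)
The plan is to recognize $c_{n,t}(\zeta_p)$ and $b_{n,t}(\zeta_p)$ as regularized special values, at negative odd integers, of an $L$-function attached to the periodic weight governing the strange series $\mathscr{F}_t$, and then to evaluate those special values by the Hurwitz-zeta formula in terms of Bernoulli polynomials. Set $a=(2^{t+1}-3)^2$ and $b=3\cdot 2^{t+2}$, so the period of $\chi_t$ is $b/2=3\cdot 2^{t+1}$ and $N_t=bp/2$. The one external input I would use is the strange identity for the torus-knot Kontsevich--Zagier series from \cite{Be} (reducing for $t=1$ to Zagier's identity for $F$): as $u\to 0^{+}$, the convergent radial value $\mathscr{F}_t(\zeta_p e^{-u})$ has the same asymptotic expansion as the Eichler integral $-\tfrac12\sum_{m\geq 1}m\,\chi_t(m)\,(\zeta_p e^{-u})^{(m^2-a)/b}$. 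Multiplying by $e^{-ua/b}$ cancels the shift and leaves
\begin{equation*}
e^{-ua/b}\,\mathscr{F}_t(\zeta_p e^{-u})\ \sim\ -\tfrac12\sum_{m\geq 1}m\,w(m)\,e^{-m^2u/b},\qquad w(m):=\chi_t(m)\,\zeta_p^{(m^2-a)/b}.
\end{equation*}
Using $p\nmid b$ (so that $\zeta_p^{(m^2-a)/b}$ is well defined) together with $2N_tm/b=pm$ and $N_t^2/b=3\cdot 2^{t}p^{2}$, both divisible by $p$, one checks that $w$ is periodic modulo $N_t$, so the right-hand side is a partial theta series with periodic coefficients.

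I would then read off the coefficients by a Mellin-transform (equivalently Euler--Maclaurin) argument. The Mellin transform of $\Phi(u):=\sum_{m\geq1}m\,w(m)e^{-m^2u/b}$ equals $\Gamma(s)\,b^{s}\,L(2s-1,w)$ with $L(s,w)=\sum_{m\geq1}w(m)m^{-s}$; moving the contour past the poles of $\Gamma$ at $s=-n$ (the potential pole at $s=1$ being absent since the radial limit is finite) gives $\Phi(u)\sim\sum_{n\geq0}\frac{(-1)^{n}}{n!\,b^{n}}L(-2n-1,w)\,u^{n}$. Matching against the defining expansion $\sum_n\frac{c_{n,t}(\zeta_p)}{n!}(u/b)^{n}$ yields $c_{n,t}(\zeta_p)=\tfrac12(-1)^{n+1}L(-2n-1,w)$, and the classical evaluation $L(-k,w)=-\frac{N_t^{k}}{k+1}\sum_{m=1}^{N_t}w(m)B_{k+1}(m/N_t)$ with $k=2n+1$ introduces $B_{2n+2}$. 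Finally I would halve the range: since $\chi_t$ is even and $\zeta_p^{(m^2-a)/b}$ and $B_{2n+2}(\cdot/N_t)$ are invariant under $m\mapsto N_t-m$ (using $B_{2n+2}(1-x)=B_{2n+2}(x)$ and the congruences above), while the two fixed points $m=N_t,\,N_t/2$ contribute nothing because $\chi_t$ vanishes there, the sum over $1\le m\le N_t$ is exactly twice the sum over $1\le m\le N_t/2$. This produces the stated formula for $c_{n,t}(\zeta_p)$, signs included.

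The passage to $b_{n,t}(\zeta_p)$ is then pure algebra. Writing $\mathscr{F}_t(\zeta_p e^{-u})=e^{ua/b}\bigl(e^{-ua/b}\mathscr{F}_t(\zeta_p e^{-u})\bigr)$ and multiplying the two power series gives $b_{n,t}(\zeta_p)=b^{-n}\sum_{j=0}^{n}\binom{n}{j}a^{\,n-j}c_{j,t}(\zeta_p)$. Grouping the $m$-sum defining $c_{j,t}(\zeta_p)$ by the residue $i\equiv(m^2-a)/b\pmod p$ identifies $c_{j,t}(\zeta_p)=a^{j}\sum_{i=0}^{p-1}\gamma_t(j,i)\zeta_p^{i}$ with $\gamma_t(j,i)$ exactly as defined; substituting and using $a^{\,n-j}a^{j}=a^{n}=(2^{t+1}-3)^{2n}$ and $b^{-n}=(3\cdot 2^{t+2})^{-n}$ gives the claimed expression for $b_{n,t}(\zeta_p)$.

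I expect the only real obstacle to be the first step, namely justifying that the finite radial value $\mathscr{F}_t(\zeta_p e^{-u})$ is asymptotically governed by the partial theta/Eichler integral attached to $\chi_t$ --- the strange identity evaluated at the root of unity $\zeta_p$. This is the genuinely analytic ingredient (quantum modularity of the torus-knot invariants in the spirit of Hikami and Lawrence--Zagier), which I would import from \cite{Be}; once it is granted, the remainder is the Mellin--Hurwitz bookkeeping above, whose only delicate points are pinning down the overall constant $-\tfrac12$ in the strange identity and checking that the boundary terms vanish in the folding step.
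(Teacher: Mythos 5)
Your proposal is correct and follows essentially the same route as the paper: the strange identity from \cite{Be} (Prop.~2.4 there) identifying $e^{-ua/b}\mathscr{F}_t(\zeta_p e^{-u})$ asymptotically with the partial theta series $-\tfrac12\sum_m m\,C_t(m)e^{-m^2u/b}$, a Mellin transform and contour shift picking up the poles of $\Gamma(s)$ at $s=-n$, the Bernoulli-polynomial evaluation of $L(-2n-1,C_t)$ (the paper cites \cite[Lemma 3.2]{AKL}, which requires the mean-value-zero property of $C_t$ that you invoke implicitly via finiteness of the radial limit), the folding $m\mapsto N_t-m$, and the same power-series multiplication and residue-class grouping for $b_{n,t}(\zeta_p)$. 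All the delicate points you flag (the constant $-\tfrac12$, periodicity of $w$ mod $N_t$, vanishing at the fixed points of the involution) are handled the same way in the paper.
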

\begin{proof}
Let us define the following:
\begin{eqnarray}
f(u)=e^{\frac{-u(2^{t+1}-3)^2}{3\cdot 2^{t+2}}}\mathcal{P}_t(\zeta_p e^{-u}):=-\dfrac{1}{2}\sum_{n\geq 0}n\chi_t(n)\zeta_p^{\frac{n^2-(2^{t+1}-3)^2}{3\cdot 2^{t+2}}}e^{-\frac{n^2u}{3\cdot 2^{t+2}}}.
\end{eqnarray}
Set $C_t(n)=\zeta_p^{\frac{n^2-(2^{t+1}-3)^2}{3\cdot 2^{t+2}}}\chi_t(n)$, and define the $L$-function
\begin{eqnarray}
L(2s-1,C_t)=\sum_{n=1}^\infty \dfrac{C_t(n)}{n^{2s-1}},\hspace{1cm}\mbox{Re}(s)>\dfrac{1}{2}.
\end{eqnarray}
Then since $C_t: \mathbb{Z}\rightarrow \mathbb{C}$ is a function with period $N_t$ and mean value $0$ (see \cite[prop. 2.2]{Be}), the result in \cite[Lemma 3.2, pp. 7]{AKL} implies that 
\begin{eqnarray}\label{LBer}
L(-2n-1,C_t)=\dfrac{-N_t^{2n+1}}{2n+2}\sum_{m=1}^{N_t}C_t(m)B_{2n+2}(m/N_t).
\end{eqnarray}
Now by Mellin transform (see \cite{MV} for more details), we have
\begin{eqnarray}\label{MT}
\int_0^\infty u^{s-1}f(u)du&=&-\dfrac{1}{2}\sum_{n\geq 1}n\chi_t(n)\zeta_p^{\frac{n^2-(2^{t+1}-3)^2}{3\cdot 2^{t+2}}}\int_0^\infty u^{s-1}e^{-\frac{n^2u}{3\cdot 2^{t+2}}}du\notag\\
&=&-\dfrac{1}{2}\sum_{n\geq 1}n\chi_t(n)\zeta_p^{\frac{n^2-(2^{t+1}-3)^2}{3\cdot 2^{t+2}}}\left(\frac{n^2}{3\cdot 2^{t+2}}\right)^{-s}\Gamma(s)\notag\\
&=&-\dfrac{(3\cdot 2^{t+2})^{s}}{2}\Gamma(s)\sum_{n\geq 1}\dfrac{C_t(n)}{n^{2s-1}}=-\dfrac{(3\cdot 2^{t+2})^{s}}{2}\Gamma(s)L(2s-1,C_t).
\end{eqnarray}
Applying Mellin inversion in (\ref{MT}) we get
\begin{eqnarray}\label{fu}
f(u)=-\dfrac{1}{2}\left(\dfrac{1}{2i\pi}\int_{\text{Re}(s)=c}(3\cdot 2^{t+2})^{s}\Gamma(s)L(2s-1,C_t)u^{-s}ds\right),
\end{eqnarray}
where $c>1/2$. Next, we note that 
\begin{eqnarray}\label{Hu}
L(2s-1,C_t)=\dfrac{1}{N_t^{2s-1}}\sum_{m=1}^{N_t}C_t(m)\zeta(2s-1,m/N_t),
\end{eqnarray}
where $\zeta(s,\alpha)$ denotes the Hurwitz zeta function. Now we truncate the contour $\text{Re}(s)=c$ at a height $R\in\mathbb{N}$. Next, we deform this truncated contour to a rectangle, $\mathcal{R}$ by enclosing the points $n=0, -1,\cdots, -R$ by choosing the side of the rectangle with $\text{Re}(s)=-R-1/2$. At this point, we just have to note that $\Gamma(s)$ has a simple pole at each $s=-n$ with residue $(-1)^n/n!$. Hence
\begin{eqnarray}\label{res}
\dfrac{1}{2i\pi}\int_{\mathcal{R}}(3\cdot 2^{t+2})^{s}\Gamma(s)L(2s-1,C_t)u^{-s}\;ds&=&\sum_{n=0}^R\displaystyle\mbox{Res}_{s=-n}\left[(3\cdot 2^{t+2})^{s}\Gamma(s)L(2s-1,C_t)u^{-s}\right]\notag\\&=&\sum_{n=0}^R\dfrac{(-1)^n}{n!(3\cdot 2^{t+2})^{n}}L(-2n-1,C_t)u^n.
\end{eqnarray}
To obtain the error caused by deformation and truncation of the contour, we use (\ref{Hu}) and the functional equation for the Hurwitz zeta function, and the asymptotics of the Gamma function to find that, for fixed $\text{Re}(s)$, $L(2s-1,C_t)$ grows at most like a polynomial in $|\text{Im}(s)|$ as $|\text{Im}(s)|\rightarrow \infty$ (see \cite[Theorem 12.24, pp. 272]{Apo}). 
Thus \eqref{res} and the estimates described above yield that the error is $O(u^{R+1/2})$. This error and equations (\ref{fu}) and (\ref{res}) imply as $R\rightarrow\infty$ that
\begin{eqnarray}\label{coeff2}
f(u)\sim -\dfrac{1}{2}\sum_{n=0}^\infty \dfrac{(-1)^n}{n!(3\cdot 2^{t+2})^{n}}L(-2n-1,C_t)u^n,\hspace{1cm} u\rightarrow 0^+.
\end{eqnarray}
On the other hand, we know from \cite[Prop. 2.4, pp. 7]{Be} that 
\begin{eqnarray}\label{coeff3}
f(u)\sim e^{\frac{-u(2^{t+1}-3)^2}{3\cdot 2^{t+2}}}\mathscr{F}_t(\zeta_p e^{-u}),\hspace{1cm}u\rightarrow 0^+,
\end{eqnarray}
whence (\ref{coeff1}) and (\ref{coeff2}) yield by comparing coefficients that
\begin{eqnarray}\label{coeff4}
c_{n,t}(\zeta_p)=-(-1)^{n}L(-2n-1,C_t)/2.
\end{eqnarray}
Using (\ref{LBer}) in (\ref{coeff4}), together with the fact that the function $\psi_t(m):=\zeta_p^{\frac{m^2-(2^{t+1}-3)^2}{3\cdot 2^{t+2}}}\chi_t(m)\;B_{2n+2}(m/N_t)$ equals $\psi_t(N_t-m)$, we obtain the required expression for $c_{n,t}(\zeta_p)$.

Next, we note that
\begin{eqnarray}\label{coeff5}
\mathscr{F}_t(\zeta_pe^{-u})&=&\left(e^{\frac{-u(2^{t+1}-3)^2}{3\cdot 2^{t+2}}}\mathscr{F}_t(\zeta_p e^{-u})\right)e^{\frac{u(2^{t+1}-3)^2}{3\cdot
2^{t+2}}}\notag\\
&=& \left(\sum_{m=0}^\infty \dfrac{c_{m,t}(\zeta_p)}{m!}\left(\dfrac{u}{3\cdot 2^{t+2}}\right)^m\right)\left(\sum_{k=0}^\infty \left(\dfrac{(2^{t+1}-3)^{2}}{3\cdot 2^{t+2}}\right)^k\dfrac{u^k}{k!}\right).
\end{eqnarray}
Hence comparing coefficients of $u^n$ on both sides of (\ref{coeff5}) we find
\begin{eqnarray}\label{coeff6'}
\dfrac{b_{n,t}(\zeta_p)}{n!}=\dfrac{(2^{t+1}-3)^{2n}}{(3\cdot 2^{t+2})^n}\sum_{\ell=0}^n \dfrac{c_{\ell,t}(\zeta_p)}{(2^{t+1}-3)^{2\ell}\;\ell!\;(n-\ell)!},
\end{eqnarray}
which yields the required expression of $b_{n,t}(\zeta_p)$ by putting the expression of $c_{n,t}(\zeta)$ in (\ref{coeff6'}).
\end{proof}
\begin{lem}\label{ls}
Let $a, b\in\mathbb{N}$ with $(p,ab)=1$ and $p^2\equiv 1\;(\emph{mod}\;b)$. Then the least non-negative integer $x_0$ satisfying $bx\equiv -a\;(\emph{mod}\;p)$ is
\begin{eqnarray*}
x_0=C_{a,b,p}-p\left\lfloor\dfrac{C_{a,b,p}}{p}\right\rfloor
\end{eqnarray*}
where 
\begin{eqnarray*}
C_{a,b,p}:=\dfrac{a(p^2-1)}{b}-p\left\lfloor\dfrac{ap}{b}\right\rfloor.
\end{eqnarray*}
In particular, when $a<b$ we have
\begin{eqnarray*}
x_0=C_{a,b,p}.
\end{eqnarray*}
\end{lem}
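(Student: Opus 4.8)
The plan is to verify directly that the integer $C_{a,b,p}$ solves the congruence $bx\equiv -a\pmod p$ and then to reduce it to its least non-negative residue. First I would check that $C_{a,b,p}$ is a genuine integer: the hypothesis $p^2\equiv 1\pmod b$ gives $b\mid(p^2-1)$, so $a(p^2-1)/b\in\mathbb{Z}$, while $p\lfloor ap/b\rfloor$ is visibly an integer; hence $C_{a,b,p}\in\mathbb{Z}$. Next I would confirm that $C_{a,b,p}$ is a solution. Multiplying through by $b$ gives $b\,C_{a,b,p}=a(p^2-1)-bp\lfloor ap/b\rfloor$, and reducing modulo $p$ the terms $ap^2$ and $bp\lfloor ap/b\rfloor$ both vanish, leaving $b\,C_{a,b,p}\equiv -a\pmod p$.

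Since $(p,ab)=1$, the map $x\mapsto bx$ is a bijection on $\mathbb{Z}/p\mathbb{Z}$, so the solutions of $bx\equiv -a\pmod p$ form exactly one residue class modulo $p$, namely that of $C_{a,b,p}$. The least non-negative element of the residue class of any integer $y$ modulo $p$ is $y-p\lfloor y/p\rfloor$; applying this with $y=C_{a,b,p}$ yields the stated formula $x_0=C_{a,b,p}-p\lfloor C_{a,b,p}/p\rfloor$, which completes the general assertion.

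For the ``in particular'' claim I would show that when $a<b$ the quantity $C_{a,b,p}$ already lies in $\{0,1,\dots,p-1\}$, so the floor correction is zero. Writing $ap/b=\lfloor ap/b\rfloor+\{ap/b\}$, a short manipulation collapses $C_{a,b,p}$ to the clean form $C_{a,b,p}=p\{ap/b\}-a/b$. Setting $c:=ap\bmod b$, so that $\{ap/b\}=c/b$, this reads $C_{a,b,p}=(pc-a)/b$; since $(p,b)=1$ and $1\le a<b$ force $b\nmid ap$, we have $c\in\{1,\dots,b-1\}$. Now I would invoke $p^2\equiv 1\pmod b$ once more: multiplying $ap\equiv c\pmod b$ by $p$ gives $a\equiv cp\pmod b$, hence $b\mid(pc-a)$ and $(pc-a)/b$ is an integer. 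Finally I bound the numerator: from $1\le c\le b-1$ and $1\le a\le b-1$ one gets $p-b+1\le pc-a\le pb-p-1$, so $-b<pc-a<pb$; dividing by $b$ and using that $C_{a,b,p}$ is an integer gives $0\le C_{a,b,p}\le p-1$, i.e.\ $\lfloor C_{a,b,p}/p\rfloor=0$ and $x_0=C_{a,b,p}$.

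The argument is essentially a chain of elementary reductions, and the only place demanding genuine care is the lower bound $C_{a,b,p}\ge 0$ in the particular case: for large $b$ the numerator $pc-a$ superficially looks as though it could be negative. The resolution is that $pc-a$ is a multiple of $b$ lying strictly above $-b$, which forces $pc-a\ge 0$; this is exactly where the divisibility $b\mid(pc-a)$, itself a consequence of $p^2\equiv 1\pmod b$, does the real work.
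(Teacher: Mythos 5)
Your proposal is correct and follows essentially the same route as the paper: verify that $C_{a,b,p}$ is an integer solving the congruence, reduce it modulo $p$, and in the case $a<b$ show $-1<C_{a,b,p}<p$ so that integrality forces $0\le C_{a,b,p}\le p-1$. Your reformulation via $c=ap\bmod b$ is only a cosmetic variant of the paper's direct floor-function bounds, and both arguments lean on the same key point, namely that integrality of $C_{a,b,p}$ upgrades the strict bounds to $0\le C_{a,b,p}\le p-1$.
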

\begin{proof}
Since $(p,ab)=1$, we easily see that that the congruence $bx\equiv -a\;(\mbox{mod}\;p)$ has a non-zero unique solution mod $p$. Also the condition $p^2\equiv 1\;(\mbox{mod}\;b)$ implies that $C_{a,b,p}$ is an integer. Thus $x_0$ as defined in the lemma is an integer and is clearly a solution of the congruence. To show that $0\leq x_0\leq p-1$, we just have to note that $x_0$ is the remainder when dividing $C_{a,b,p}$ by $p$.

If $a<b$, we have
\begin{eqnarray}\label{ineq1}
C_{a,b,p}\geq \dfrac{a(p^2-1)}{b}-\dfrac{ap^2}{b}=-a/b\geq 0,
\end{eqnarray}
where the last inequality in (\ref{ineq1}) is due to the fact that $a<b$ and $C_{a,b,p}\in\mathbb{Z}$. On the other hand, we have
\begin{eqnarray}\label{ineq2}
C_{a,b,p}\leq \dfrac{a(p^2-1)}{b}-p\left(\dfrac{ap}{b}-1\right)=p-a/b\leq p-1,
\end{eqnarray}
where, once again, the last inequality in (\ref{ineq2}) is due to the fact that $a<b$ and $C_{a,b,p}\in\mathbb{Z}$. In view of (\ref{ineq1}) and (\ref{ineq2}) we see that $\left\lfloor\dfrac{C_{a,b,p}}{p}\right\rfloor=0$, and we are done.
\end{proof}
Next, let $C(n,i,j,p)$, $n\geq 0, \;0\leq i\leq p-1,\;0\leq j\leq n$ denote a Stirling like array of numbers (defined by Andrews-Sellers \cite{AS}) satisfying the following recurrence:
\begin{eqnarray*}
C(n+1,i,j,p)=(i+jp)C(n,i,j,p)+pC(n,i,j-1,p),
\end{eqnarray*}
and the initial value
\begin{eqnarray*}
C(0,i,0,p)=1.
\end{eqnarray*}
By defining a generalization of the signless Stirling numbers of the first kind, $s_1(n,j,m)$ for $0\leq j\leq n$ as follows:
\begin{eqnarray*}
\sum_{j=0}^ns_1(n,j,m)x^j=(x-m)(x-m+1)\cdots (x-m+n-1),
\end{eqnarray*}
Garvan \cite{G} proved the following result which we will require later:
\begin{lem}\label{Gar}
Let $i_0=(p^2-1)z-mp$. Suppose that
\begin{eqnarray*}
\sum_{\ell=0}^nC(n,i_0,\ell,p)A_1(\ell,m)=(-1)^nz^n\sum_{k=0}^n\binom{n}{k} X(k)
\end{eqnarray*}
for $n\geq 0$. Then for $n\geq 0$ we have 
\begin{eqnarray*}
A_1(n,m)=(-1)^n\sum_{k=0}^n\sum_{j=k}^n\binom{j}{k}s_1(n,j,m)p^{j-2k}X(k)z^j.
\end{eqnarray*}
\end{lem}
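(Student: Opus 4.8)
The plan is to obtain a clean closed form for the array $C(n,i,j,p)$ and then invert the triangular system in the hypothesis. Starting from $C(n+1,i,j,p)=(i+jp)C(n,i,j,p)+pC(n,i,j-1,p)$ and $C(0,i,0,p)=1$, I would prove by induction on $n$ the generating identity
\[
\sum_{j=0}^{n}C(n,i,j,p)\,x^{\underline{j}}=(i+px)^n,
\]
where $x^{\underline{j}}=x(x-1)\cdots(x-j+1)$ is the falling factorial. The inductive step uses only $x^{\underline{j+1}}=x^{\underline{j}}(x-j)$ together with the telescoping $i+jp+p(x-j)=i+px$. (Equivalently this says $C(n,i,j,p)=\sum_{b}\binom{n}{b}i^{n-b}p^{b}\,S(b,j)$ with $S(b,j)$ the Stirling numbers of the second kind.) In particular the top coefficient is $C(n,i,n,p)=p^{n}$.

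Next I would observe that the hypothesis, read for $n=0,1,2,\dots$, is a lower-triangular linear system for the unknowns $A_1(\ell,m)$: the coefficient of the leading unknown $A_1(n,m)$ is $C(n,i_0,n,p)=p^{n}\neq 0$, so the system determines the sequence $A_1(\cdot,m)$ uniquely. Hence it suffices to verify that the claimed closed form solves it. Writing the claimed formula as $A_1(\ell,m)=\sum_{k}W(\ell,k)X(k)$ with $W(\ell,k)=(-1)^{\ell}\sum_{j=k}^{\ell}\binom{j}{k}s_1(\ell,j,m)p^{j-2k}z^{j}$, the hypothesis becomes $\sum_{k}X(k)\sum_{\ell}C(n,i_0,\ell,p)W(\ell,k)=(-1)^{n}z^{n}\sum_{k}\binom{n}{k}X(k)$; matching the coefficient of each $X(k)$ reduces everything to the single $X$-free identity
\[
\sum_{\ell}C(n,i_0,\ell,p)\,W(\ell,k)=(-1)^{n}z^{n}\binom{n}{k}.
\]

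To prove this I would reinterpret the inner sum through the operator identity $\binom{j}{k}t^{j}=\frac{t^{k}}{k!}\frac{d^{k}}{dt^{k}}t^{j}$, which yields $\sum_{j}\binom{j}{k}s_1(\ell,j,m)t^{j}=\frac{t^{k}}{k!}\frac{d^{k}}{dt^{k}}\big[(t-m)^{\overline{\ell}}\big]$, where $(t-m)^{\overline{\ell}}=\prod_{i=0}^{\ell-1}(t-m+i)$ is the generating polynomial of the $s_1(\ell,\cdot,m)$. After pulling out the powers of $p$ and $z$ (take $t=pz$), the key simplification is the sign identity $(-1)^{\ell}(t-m)^{\overline{\ell}}=(m-t)^{\underline{\ell}}$, which lets me apply the generating identity from the first paragraph with $x=m-t$:
\[
\sum_{\ell}C(n,i_0,\ell,p)(m-t)^{\underline{\ell}}=\big(i_0+p(m-t)\big)^{n}.
\]
Here the special value $i_0=(p^{2}-1)z-mp$ is precisely engineered so that $i_0+p(m-t)=(p^{2}-1)z-pt$. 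Differentiating $k$ times in $t$, evaluating at $t=pz$ where $(p^{2}-1)z-p\cdot pz=-z$, and collecting the factor $z^{k}p^{-k}/k!$ produced along the way, everything should collapse to $\binom{n}{k}(-1)^{n}z^{n}$, as required.

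The routine part is the induction establishing the generating identity. The main obstacle — really the main idea — is the reduction to the $X$-free identity together with the two factorial manipulations (the differential-operator form of $\binom{j}{k}s_1(\ell,j,m)$ and the sign identity $(-1)^{\ell}(t-m)^{\overline{\ell}}=(m-t)^{\underline{\ell}}$) that convert the apparently unrelated first- and second-kind structures into a single power, at which point the precise choice of $i_0$ forces the collapse under evaluation at $t=pz$.
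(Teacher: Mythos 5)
Your proof is correct, and the verification goes through: the generating identity $\sum_{j}C(n,i,j,p)\,x^{\underline{j}}=(i+px)^{n}$ follows by the induction you describe, it gives $C(n,i,n,p)=p^{n}\neq 0$ so the hypothesis is a triangular system determining $A_1(\cdot,m)$ uniquely, and the $X$-free identity collapses as claimed (the powers of $p$ cancel as $p^{-2k}\cdot p^{k}\cdot p^{k}$, the signs combine to $(-1)^{n}$, and $i_0+p(m-t)=(p^2-1)z-pt$ evaluates to $-z$ at $t=pz$). Note that the paper itself supplies no proof of this lemma; it is quoted verbatim from Garvan \cite{G}, so there is nothing internal to compare against. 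Your argument is a complete, self-contained substitute: rather than constructing the inverse of the $C$-array explicitly via Stirling orthogonality (the route one would extract from \cite{G}, where $C(n,i,j,p)=\sum_{b}\binom{n}{b}i^{n-b}p^{b}S(b,j)$ is inverted against the generalized first-kind numbers $s_1$), you exploit uniqueness and merely verify the proposed solution, with the differential-operator form of $\binom{j}{k}s_1(\ell,j,m)$ and the sign identity $(-1)^{\ell}(t-m)^{\overline{\ell}}=(m-t)^{\underline{\ell}}$ doing the work. The only point worth making explicit is that the uniqueness step presupposes the quantities live in a ring in which $p^{n}$ is not a zero divisor (e.g.\ $\mathbb{Q}$ or $\mathbb{C}$), which is the setting in which the lemma is applied.
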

\begin{lem}\label{eqcoeff}
Let $p\geq 5$ be a prime and assume $0\leq j\leq p-1$ and $0\leq k\leq M-1\leq N-1$. Then
\begin{eqnarray*}
\alpha_t(p,N,j,k)=\alpha_t(p,M,j,k),
\end{eqnarray*}
where $\alpha_t(p,n,i,k)$ is defined in \emph{\eqref{sum1}}.
\end{lem}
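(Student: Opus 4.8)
The plan is to show that raising the truncation height from $pM-1$ to $pN-1$ perturbs the dissected, substituted series only in order $\geq M$, and to encode this as divisibility by a power of $1-q^p$.

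The key elementary input is that $(1-q^p)^{\lfloor n/p\rfloor}$ divides $(q)_n$: among the factors $1-q^k$ with $1\le k\le n$ there are exactly $\lfloor n/p\rfloor$ indices with $p\mid k$, and each such $1-q^{kp}$ is a multiple of $1-q^p$; in particular $(1-q^p)^M\mid (q)_n$ for every $n\ge pM$. I would next consider the difference $D(q):=\mathscr{F}_t(q;pN-1)-\mathscr{F}_t(q;pM-1)$. Since $M\le N$ by hypothesis, \eqref{trunc} gives $D(q)=(-1)^{h''(t)}q^{-h'(t)}\sum_{n=pM}^{pN-1}(\cdots)$ with each inner summand carrying $(q)_n$ as a factor. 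Because the Gaussian binomials $\binom{n+I(\ell\le k)}{j_\ell}_q$ are genuine polynomials in $q$, and the exponents $-h'(t)$ and $(-a(t)+\sum j_\ell\ell)/m(t)$ are integers, each summand is $(q)_n$ times a Laurent polynomial, and the smallest occurring index $n=pM$ already satisfies $\lfloor n/p\rfloor\ge M$. Hence $(1-q^p)^M\mid D(q)$ in $\mathbb{Z}[q,q^{-1}]$.

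I would then push this divisibility through the $p$-dissection of \eqref{p-dis}. Writing $D(q)=(1-q^p)^M E(q)$ and dissecting $E(q)=\sum_{i=0}^{p-1}q^i\mathcal{E}_i(q^p)$, uniqueness of the $p$-dissection together with the fact that $(1-q^p)^M$ involves $q^p$ alone yields $\mathcal{A}_{p,t}(pN-1,i,x)-\mathcal{A}_{p,t}(pM-1,i,x)=(1-x)^M\mathcal{E}_i(x)$ for each $i$. Specialising $x=1-q$ turns $(1-x)^M$ into $q^M$, so $\mathcal{A}_{p,t}(pN-1,j,1-q)-\mathcal{A}_{p,t}(pM-1,j,1-q)=q^M\mathcal{E}_j(1-q)=O(q^M)$; comparing with \eqref{sum1} shows that the coefficients of $q^k$ coincide for $0\le k\le M-1$, which is precisely $\alpha_t(p,N,j,k)=\alpha_t(p,M,j,k)$.

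The step I expect to require the most care is verifying that $(q)_n$ is a clean factor of each summand in \eqref{trunc}: one must confirm that the $q$-binomial coefficients contribute no denominators (they are Gaussian polynomials) and that the prefactor and inner $q$-exponents are integral, so that $\mathscr{F}_t(q;\cdot)$ genuinely lives in $\mathbb{Z}[q,q^{-1}]$ and the divisibility by $(1-q^p)^M$ is legitimate. Once this is in place, the divisibility $(1-q^p)^M\mid(q)_n$ and its transfer through the dissection are routine.
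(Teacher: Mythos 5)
Your proof is correct, but it reaches the conclusion by a different mechanism than the paper. You isolate the divisibility $(1-q^p)^{\lfloor n/p\rfloor}\mid (q)_n$ at the level of the Laurent polynomial ring, conclude that the tail $D(q)=\mathscr{F}_t(q;pN-1)-\mathscr{F}_t(q;pM-1)$ is divisible by $(1-q^p)^M$, and then transfer this factor through the $p$-dissection by uniqueness (each component picks up a clean factor $(1-x)^M$, which becomes $q^M$ under $x\mapsto 1-q$). The paper instead inverts the dissection by orthogonality of roots of unity, writing $\mathcal{A}_{p,t}(N,j,q)=\frac{1}{p}\sum_{k}\zeta_p^{-jk}q^{-j/p}\mathscr{F}_t(\zeta_p^k q^{1/p};N)$, substitutes $q\mapsto 1-q$, and estimates $(\zeta_p^k(1-q)^{1/p};\zeta_p^k(1-q)^{1/p})_n=O(q^M)$ for $n\geq pM$ by extracting exactly the sub-product over indices divisible by $p$, where each factor $1-(1-q)^j=jq+O(q^2)$ contributes one power of $q$. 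The underlying arithmetic fact is the same in both arguments --- the $\lfloor n/p\rfloor$ factors of $(q)_n$ indexed by multiples of $p$ --- but your route stays entirely inside $\mathbb{Z}[q,q^{-1}]$ and avoids fractional powers and cyclotomic coefficients, at the cost of having to justify the transfer of a $q^p$-divisor through the dissection (which you do correctly via uniqueness); the paper's route gets the component-wise statement in one stroke from the inversion formula but has to work with formal expressions in $q^{1/p}$. Your concern about integrality of the exponents and polynomiality of the Gaussian binomials is well placed and checks out: the congruence $3\sum j_\ell\ell\equiv 1\;(\mathrm{mod}\;m(t))$ together with $3a(t)\equiv 1\;(\mathrm{mod}\;m(t))$ makes the inner exponent an integer, and only finitely many tuples $(j_\ell)$ contribute since the Gaussian binomials vanish for $j_\ell>n+1$, so each summand is indeed $(q)_n$ times a Laurent polynomial.
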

\begin{proof}
Using (\ref{p-dis}) and orthogonality of roots of unity, it follows that
\begin{eqnarray}\label{eq4}
\mathcal{A}_{p,t}(N,j,q)=\dfrac{1}{p}\sum_{k=0}^{p-1}\zeta_p^{-jk}q^{-j/p}\mathscr{F}_t(\zeta_p^{k}q^{1/p};N).
\end{eqnarray}
Next, let us take $n\geq pM$ so that
\begin{eqnarray}\label{eq5}
(\zeta_p^k(1-q)^{1/p},\zeta_p^k(1-q)^{1/p})_n&=&\prod_{j=1}^n(1-\zeta_p^{kj}(1-q)^{j/p})\notag\\
&=&\prod_{\substack{1\leq j\leq n\\p\mid j}}(1-\zeta_p^{kj}(1-q)^{j/p})\prod_{\substack{1\leq j\leq n\\p\nmid j}}(1-\zeta_p^{kj}(1-q)^{j/p})\notag\\
&=&\prod_{1\leq j\leq n/p}(1-(1-q)^{j})\prod_{\substack{1\leq j\leq n\\p\nmid j}}(1-\zeta_p^{kj}(1-q)^{j/p})\notag\\
&=&\prod_{1\leq j\leq n/p}(jq+O(q^2))\prod_{\substack{1\leq j\leq n\\p\nmid j}}(1-\zeta_p^{kj}(1-q)^{j/p})=O(q^M),
\end{eqnarray}
since $n/p\geq M$. Thus from (\ref{trunc}), (\ref{eq4}) and \eqref{eq5} we have
\begin{eqnarray}
\mathcal{A}_{p,t}(pN-1,j,1-q)&=&\dfrac{1}{p}\sum_{k=0}^{p-1}\zeta_p^{-jk}(1-q)^{-j/p}\mathscr{F}_t(\zeta_p^{k}(1-q)^{1/p};pM-1)+O(q^M)\notag\\
&=&\mathcal{A}_{p,t}(pM-1,j,1-q)+O(q^M),
\end{eqnarray}
from which the lemma follows.
\end{proof}
\begin{lem}\label{setp}
The set of primes $p$ satisfying the quadratic congruence $p^2\equiv 1\;(\emph{mod}\;3\cdot 2^{t+2})$ are precisely the sets described in \emph{\textbf{(I)} - \textbf{(IV)}}.
\end{lem}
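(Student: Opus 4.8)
The plan is to reduce the single congruence modulo $3\cdot 2^{t+2}$ to two independent congruences via the Chinese Remainder Theorem, solve the $2$-power part explicitly, and then match the resulting residue classes modulo $3\cdot 2^{t+1}$ against the four families $\mathcal{P}_1,\dots,\mathcal{P}_4$.

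First, since $p\geq 5$ we have $\gcd(p,3\cdot 2^{t+2})=1$, and because $3$ and $2^{t+2}$ are coprime, CRT gives that $p^2\equiv 1\pmod{3\cdot 2^{t+2}}$ holds if and only if $p^2\equiv 1\pmod 3$ and $p^2\equiv 1\pmod{2^{t+2}}$. The first congruence is automatic for any prime $p\neq 3$, so the content lies entirely in the $2$-power condition. Next I would solve $p^2\equiv 1\pmod{2^{t+2}}$. The key elementary fact is that for $t\geq 2$ (so $t+2\geq 4$), one has $p^2\equiv 1\pmod{2^{t+2}}$ if and only if $p\equiv \pm 1\pmod{2^{t+1}}$. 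One direction is a one-line computation: writing $p=\pm 1+2^{t+1}m$ gives $p^2=1\pm 2^{t+2}m+2^{2t+2}m^2\equiv 1\pmod{2^{t+2}}$. The converse uses the standard description of the four square roots of $1$ modulo $2^{t+2}$, namely $\pm 1$ and $2^{t+1}\pm 1$, all of which reduce to $\pm 1$ modulo $2^{t+1}$. Thus the whole problem collapses to characterizing primes with $p\equiv\pm 1\pmod{2^{t+1}}$.

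Finally, combining $p\equiv\pm 1\pmod{2^{t+1}}$ with the two admissible possibilities $p\equiv 1,2\pmod 3$ (both allowed, since only $p\not\equiv 0\pmod 3$ is forced) produces, by CRT, exactly four residue classes modulo $3\cdot 2^{t+1}$. I would then verify that these four classes are precisely $\mathcal{P}_1,\dots,\mathcal{P}_4$: each listed residue reduces to $1$ or to $3\cdot 2^{t+1}-1\equiv -1$ modulo $2^{t+1}$, and a short computation of $2^{t+1}\bmod 3$ (using $2\equiv -1\pmod 3$, so that the parity of $t+1$ controls the sign) shows that $\mathcal{P}_1,\dots,\mathcal{P}_4$ realize all four distinct combinations of $(\pm 1 \bmod 2^{t+1},\ \{1,2\}\bmod 3)$, and hence exhaust the solution set.

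The only mild subtlety — and the one place where care is needed — is this parity bookkeeping, since the definitions of $r_1(t)$ and $r_2(t)$ switch between $2^{t+1}\mp 1$ and $2^{t+2}\mp 1$ according to whether $t$ is even or odd. I would therefore present the final verification in the two parity cases separately, checking in each that the four given residues are pairwise distinct and exactly account for the four $(\bmod\,2^{t+1},\bmod\,3)$ pairs identified above, so that no spurious or missing class can occur.
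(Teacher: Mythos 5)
Your proposal is correct, and while it shares the same CRT skeleton as the paper's argument, it handles the $2$-power part in a genuinely cleaner way. The paper writes $p=3\cdot 2^{t+1}k+j$ with $0<j<3\cdot 2^{t+1}$, reduces to $j^2\equiv 1\pmod{3\cdot 2^{t+2}}$, finds square roots of unity modulo $2^{t+2}$ via Hensel's lemma, and then assembles the answer by explicit Chinese-remainder computations with computed inverses $m_1,m_2$, only reducing modulo $3\cdot 2^{t+1}$ at the very end. You instead invoke the standard classification of the four square roots of $1$ modulo $2^{t+2}$ (namely $\pm 1$ and $2^{t+1}\pm 1$) to conclude at once that $p^2\equiv 1\pmod{2^{t+2}}$ is equivalent to $p\equiv\pm 1\pmod{2^{t+1}}$; this both eliminates the explicit CRT inverse computations and makes transparent \emph{why} the answer is naturally a union of residue classes modulo $3\cdot 2^{t+1}$ rather than $3\cdot 2^{t+2}$. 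It also sidesteps a rough spot in the paper's version, whose Hensel step records only the two roots $1$ and $1+2^{t+1}$ modulo $2^{t+2}$ rather than all four; your descent to $\pm 1$ modulo $2^{t+1}$ absorbs the missing roots automatically. The one step you defer --- checking that the four pairs $(\pm 1\bmod 2^{t+1},\ \{1,2\}\bmod 3)$ are realized exactly by $\mathcal{P}_1,\dots,\mathcal{P}_4$ --- is routine: using $2\equiv -1\pmod 3$ one finds, for example, that the pair $(-1\bmod 2^{t+1},\,1\bmod 3)$ is represented by $2^{t+1}-1$ when $t$ is even and by $2^{t+2}-1$ when $t$ is odd, in agreement with $r_1(t)$ and $r_2(t)$, and similarly for the pair $(1\bmod 2^{t+1},\,2\bmod 3)$; so the matching goes through and no class is missing or spurious.
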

\begin{proof}
Let $p$ be a prime satisfying
\begin{eqnarray}\label{qcong}
p^2\equiv 1\;(\mbox{mod}\;3\cdot 2^{t+2}).
\end{eqnarray}
Using the division algorithm, we have $p=3\cdot 2^{t+1}k+j$ for some $k\in\mathbb{Z}$ and $0<j<3\cdot 2^{t+1}$ with $(j,6)=1$. It is clear that $p$ satisfies the congruence (\ref{qcong}) if and only if
\begin{eqnarray}\label{qcong1}
j^2\equiv 1\;(\mbox{mod}\;3\cdot 2^{t+2}).
\end{eqnarray}
Congruence (\ref{qcong1}) has a solution if and only if the following system of congruences has solutions
\begin{eqnarray}\label{qcong2}
j^2\equiv 1\;(\mbox{mod}\;3),\hspace{0.2cm}j^2\equiv 1\;(\mbox{mod}\;2^{t+2}).
\end{eqnarray}
Clearly, the first congruence in (\ref{qcong2}) has two solutions, namely
\begin{eqnarray}\label{qcong3}
j\equiv 1,\;2\;(\mbox{mod}\;3).
\end{eqnarray}
To solve the second congruence in (\ref{qcong2}), we use Hensel's lemma to lift a solution modulo $2^{t+1}$ to a solution modulo $2^{t+2}$. To this end, let us put $f(j)=j^2-1$. Then the second congruence in (\ref{qcong2}) is equivalent to 
\begin{eqnarray}\label{qcong4}
f(j)\equiv 0\;(\mbox{mod}\;2^{t+2}).
\end{eqnarray}
Plainly, we see that $f(1)=0$ which implies that $j=1$ is a solution of (\ref{qcong4}). Next, we have $f'(1)=2\equiv 0\;(\mbox{mod}\;2)$. Hence, using Hensel's lemma (see, for example, \cite[Chapter 5, pp. 120]{Apo}) we find two distinct solutions of (\ref{qcong4}) , namely
\begin{eqnarray}\label{qcong5}
j\equiv 1,\;1+2^{t+1}\;(\mbox{mod}\;2^{t+2}).
\end{eqnarray}
Combining \eqref{qcong3} and \eqref{qcong5}, we see that there are four possible solutions of \eqref{qcong2}. Namely, we have
\begin{eqnarray}
&&j\equiv 1\;(\mbox{mod}\;3),\hspace{0.2cm}j\equiv 1\;(\mbox{mod}\;2^{t+2}),\label{set1}\\
&&j\equiv 1\;(\mbox{mod}\;3),\hspace{0.2cm}j\equiv 1+2^{t+1}\;(\mbox{mod}\;2^{t+2}),\label{set2}\\
&&j\equiv 2\;(\mbox{mod}\;3),\hspace{0.2cm}j\equiv 1\;(\mbox{mod}\;2^{t+2}),\label{set3}\\
&&j\equiv 2\;(\mbox{mod}\;3),\hspace{0.2cm}j\equiv 1+2^{t+1}\;(\mbox{mod}\;2^{t+2}).\label{set4}
\end{eqnarray}
Clearly, the solution of (\ref{set1}) is $j\equiv 1\;(\mbox{mod}\;3\cdot 2^{t+2})$. Combining this with $p\equiv j\;(\mbox{mod}\;3\cdot 2^{t+1})$ we have
\begin{eqnarray*}
p\equiv 1\;(\mbox{mod}\;3\cdot 2^{t+1}).
\end{eqnarray*}
To solve the system of congruences in \eqref{set2} - \eqref{set4}, we use the Chinese remainder theorem. For that, we need to solve the following congruences:
\begin{eqnarray}
&&2^{t+2}\cdot m_1\equiv 1\;(\mbox{mod}\;3),\label{inv1}\\
&&3\cdot m_2\equiv 1\;(\mbox{mod}\;2^{t+2})\label{inv2},
\end{eqnarray}
for some integers $m_1$ and $m_2$. In order to solve \eqref{inv1} and \eqref{inv2}, we need to consider the following two cases:\\
\textbf{Case A}: If $t$ is even. In this case, we replace $t$ by $2t$. Then the solutions of \eqref{inv1} and \eqref{inv2} are given by
\begin{eqnarray}
&&m_1\equiv 1\;(\mbox{mod}\;3),\label{sole1}\\
&&m_2\equiv \dfrac{1+2^{2t+3}}{3}\;(\mbox{mod}\;2^{2t+2}).\label{sole2}
\end{eqnarray}
It is not hard to see that $(1+2^{2t+3})/3$ is indeed an integer less than $2^{2t+2}$. \\
\textbf{Case B}: If $t$ is odd. In this case, we replace $t$ by $2t+1$. Then the solutions of \eqref{inv1} and \eqref{inv2} are given by
\begin{eqnarray}
&&m_1\equiv 2\;(\mbox{mod}\;3),\label{solo1}\\
&&m_2\equiv \dfrac{1+2^{2t+3}}{3}\;(\mbox{mod}\;2^{2t+3}).\label{solo2}
\end{eqnarray}
Using the Chinese remainder theorem and congruences \eqref{sole1} - \eqref{solo2}, the solution of the system \eqref{set2} is obtained by
\begin{eqnarray}
j=\left\{\begin{array}{cc}
     (1+2^{2t+1})\cdot 3\cdot\dfrac{1+2^{2t+3}}{3}+1\cdot 2^{2t+2}\cdot 1,& \mbox{for \textbf{Case A}}, \\
     (1+2^{2t+1})\cdot 3\cdot\dfrac{1+2^{2t+3}}{3}+1\cdot 2^{2t+3}\cdot 2,& \mbox{for \textbf{Case B}}, 
\end{array}\right.
\end{eqnarray}
which when simplified modulo $3\cdot 2^{t+2}$ reduces to 
\begin{eqnarray}
j\equiv 1+3\cdot 2^{t+1}\equiv 3\cdot 2^{t+1}-1\;(\mbox{mod}\;3\cdot 2^{t+2}).
\end{eqnarray}
Combining this with $p\equiv j\;(\mbox{mod}\;3\cdot 2^{t+1})$, we get
\begin{eqnarray}
p\equiv 3\cdot 2^{t+1}-1\;(\mbox{mod}\;3\cdot 2^{t+1}).
\end{eqnarray}
In a similar manner, using the Chinese remainder theorem, we easily obtain the solutions $j\;(\mbox{mod}\;3\cdot 2^{t+2})$ of the systems \eqref{set3} and \eqref{set4}. Combining these solutions with the fact that $p\equiv j\;(\mbox{mod}\;3\cdot 2^{t+1})$, we precisely obtain the set $\mathcal{P}_3$ (resp. $\mathcal{P}_4$) defined by the congruences in \textbf{(III)} (resp. \textbf{(IV)}). 
\end{proof}
\begin{lem}\label{signpre}
For the sets $\mathcal{P}_1, \mathcal{P}_2, \mathcal{P}_3$ and $\mathcal{P}_4$ of primes described in \emph{\textbf{(I)} - \textbf{(IV)}}, we have
\begin{eqnarray*}
\chi_t(mp)=\left\{\begin{array}{cc}
    \chi_t(m), & \emph{if}\;p\in\mathcal{P}_1\cup\mathcal{P}_2, \\
     -\chi_t(m),& \emph{if}\;p\in\mathcal{P}_3\cup\mathcal{P}_4
\end{array}\right.
\end{eqnarray*}
for all $m$ such that $\chi_t(mp)\neq 0$.
\end{lem}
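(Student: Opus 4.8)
The plan is to reduce everything to the action of multiplication by $p$ on residues modulo the period $M:=3\cdot 2^{t+1}$ of $\chi_t$. First I would record the support of $\chi_t$ in symmetric form. Since $2^{t+2}\pm 3\equiv -(2^{t+1}\mp 3)\pmod{M}$, the residues on which $\chi_t$ does not vanish are exactly $\pm(2^{t+1}-3)$, where $\chi_t=+1$, and $\pm(2^{t+1}+3)$, where $\chi_t=-1$; in particular $\chi_t$ is even, $\chi_t(-n)=\chi_t(n)$. Each of these four residues is coprime to $M$ (it is odd, and $2^{t+1}\pm 3\equiv 2^{t+1}\equiv(-1)^{t+1}\not\equiv 0\pmod 3$), and $\gcd(p,M)=1$ because $p\ge 5$.

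I would then prove the sharper statement that $\chi_t(pm)=\varepsilon_p\,\chi_t(m)$ for every integer $m$, with $\varepsilon_p=+1$ on $\mathcal{P}_1\cup\mathcal{P}_2$ and $\varepsilon_p=-1$ on $\mathcal{P}_3\cup\mathcal{P}_4$; the lemma is then the instance $\chi_t(mp)\ne 0$. By Lemma~\ref{setp}, $p^2\equiv 1\pmod{3\cdot 2^{t+2}}$, hence $p^2\equiv 1\pmod M$, so multiplication by $p$ is an involution of $\mathbb{Z}/M\mathbb{Z}$; it therefore suffices to compute the images of the four support residues under multiplication by $p$. Once these images are seen to be support residues, bijectivity of $m\mapsto pm$ forces $\chi_t(pm)=0\iff\chi_t(m)=0$, so the identity $\chi_t(pm)=\varepsilon_p\chi_t(m)$ will hold for all $m$, not only on the support. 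For $\mathcal{P}_1$ we have $p\equiv 1\pmod M$, giving $\varepsilon_p=+1$ at once; for $\mathcal{P}_2$ we have $p\equiv -1\pmod M$, whence $\chi_t(pm)=\chi_t(-m)=\chi_t(m)$ by evenness, again $\varepsilon_p=+1$.

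The substance of the argument is the analysis of $\mathcal{P}_3$ and $\mathcal{P}_4$. Here I would multiply the two representatives $2^{t+1}-3$ and $2^{t+1}+3$ by the prescribed residue $p\equiv r_i(t)\pmod M$, reducing with the identity $(2^{t+1})^2\equiv(2^{t+1}\bmod 3)\cdot 2^{t+1}\pmod{M}$; this step is exactly where the parity of $t$ intervenes, which is why $r_1(t)$ and $r_2(t)$ are defined case-wise. The expected conclusion is that multiplication by $p$ interchanges the $(+1)$-support $\pm(2^{t+1}-3)$ with the $(-1)$-support $\pm(2^{t+1}+3)$, yielding $\varepsilon_p=-1$ throughout $\mathcal{P}_3\cup\mathcal{P}_4$. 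The main obstacle is nothing conceptual but rather the bookkeeping: one must carry out these residue multiplications in both parity branches for each of $\mathcal{P}_3$ and $\mathcal{P}_4$ and verify that each product lands precisely on the claimed support element modulo $3\cdot 2^{t+1}$; the rest is forced by periodicity, evenness, and $p^2\equiv 1$.
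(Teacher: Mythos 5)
Your proposal is correct and follows essentially the same route as the paper: both identify the two support classes $\pm(2^{t+1}-3)$ and $\pm(2^{t+1}+3)$ modulo $3\cdot 2^{t+1}$ and show that multiplication by $p$ fixes them for $p\in\mathcal{P}_1\cup\mathcal{P}_2$ and swaps them for $p\in\mathcal{P}_3\cup\mathcal{P}_4$, with the parity of $t$ entering through $2^{2t+2}\equiv(2^{t+1}\bmod 3)\cdot 2^{t+1}\pmod{3\cdot 2^{t+1}}$. The residue computations you defer do check out (e.g.\ $(2^{t+1}-1)(2^{t+1}-3)\equiv 2^{t+1}+3$ for $t$ even), and your forward computation of images under the bijection $m\mapsto pm$ is just the mirror of the paper's backward deduction from $mp\in C_i$.
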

\begin{proof}
First, let us denote by $C_1$ and $C_2$ the following residue classes of integers.
\begin{eqnarray*}
C_1&:=&\{n\in\mathbb{Z}: n\equiv \pm (2^{t+1}-3)\;(\mbox{mod}\;3\cdot 2^{t+1})\}\\
C_2&:=&\{n\in\mathbb{Z}: n\equiv \pm (2^{t+1}+3)\;(\mbox{mod}\;3\cdot 2^{t+1})\}.
\end{eqnarray*}
Then it is clear that
\begin{eqnarray}
\chi_t(n)=\left\{
\begin{array}{ccc}
    1, &  \mbox{if}\;n\in C_1,\\
    -1, &  \mbox{if}\;n\in C_2,\\
    0,&\mbox{otherwise}.
\end{array}\right.
\end{eqnarray}
Let $p\in\mathcal{P}_1\cup\mathcal{P}_2$. Then we have
\begin{eqnarray}\label{pcong1}
p\equiv \pm 1\;(\mbox{mod}\;3\cdot 2^{t+1}).
\end{eqnarray}
Next, let $m$ be such that $\chi_t(mp)\neq 0$. Then we must have $mp\in C_1\cup C_2$. Since $p$ satisfies \eqref{pcong1}, we immediately see that
\begin{eqnarray*}
1)\;\;mp\in C_1\;\mbox{if and only if}\;m\in C_1,\hspace{0.5cm} 2)\;\;mp\in C_2\;\mbox{if and only if}\;m\in C_2.
\end{eqnarray*}
This is equivalent to saying that
\begin{eqnarray}\label{spre}
\chi_t(mp)=\chi_t(m),\;\mbox{for}\;p\in\mathcal{P}_1\cup\mathcal{P}_2. 
\end{eqnarray}
Now, let $p\in\mathcal{P}_3$. Then we have
\begin{eqnarray}\label{mp3}
p\equiv r_1(t),\;\;(\mbox{mod}\;3\cdot 2^{t+1}),\;r_{1}(t)=\left\{\begin{array}{cc}2^{t+1}-1,& t\;\mbox{even},\\
2^{t+1}+1,& t\;\mbox{odd}.
\end{array}\right.
\end{eqnarray}
First assume that $t$ is even, so that we replace $t$ by $2t$. Thus, if $m$ be such that $\chi_{2t}(mp)\neq 0$, then we must have $mp\in C_1\cup C_2$. Let $mp\in C_1$, then we have
\begin{eqnarray}\label{tevp3}
mp\equiv \pm(2^{2t+1}-3)\;(\mbox{mod}\;3\cdot 2^{2t+1}).
\end{eqnarray}
Since $p\in\mathcal{P}_3$ and $t$ is even, \eqref{mp3} and \eqref{tevp3} imply that
\begin{eqnarray}\label{inv3}
m(2^{2t+1}-1)\equiv \pm(2^{2t+1}-3)\;(\mbox{mod}\;3\cdot 2^{2t+1}).
\end{eqnarray}
Note that 
\begin{eqnarray}\label{someeq1}
(2^{2t+1}-1)(2^{2t+1}+3)=2^{2t+1}(2^{2t+1}+1)+(2^{2t+1}-3)\equiv (2^{2t+1}-3)\;(\mbox{mod}\;3\cdot 2^{2t+1}).
\end{eqnarray}
Thus, multiplying both sides of the congruence in \eqref{inv3} by $(2^{2t+1}+3)$ and using \eqref{someeq1} we get
\begin{eqnarray*}
m(2^{2t+1}-3)\equiv \pm (2^{2t+1}-3)(2^{2t+1}+3)\;(\mbox{mod}\;3\cdot 2^{2t+1}), 
\end{eqnarray*}
and since $(2^{2t+1}-3, 3\cdot 2^{2t+1})=1$, this yields
\begin{eqnarray}
m\equiv \pm(2^{2t+1}+3)\;(\mbox{mod}\;3\cdot 2^{2t+1}),
\end{eqnarray}
which implies that $m\in C_2$. Next, let $mp\in C_2$, that is
\begin{eqnarray}\label{todp3}
mp\equiv \pm(2^{2t+1}+3)\;(\mbox{mod}\;3\cdot 2^{2t+1}).
\end{eqnarray}
Since $p\in\mathcal{P}_3$ and $t$ is even, \eqref{mp3} and \eqref{todp3} imply that
\begin{eqnarray}\label{inv4}
m(2^{2t+1}-1)\equiv \pm(2^{2t+1}+3)\;(\mbox{mod}\;3\cdot 2^{2t+1}).
\end{eqnarray}
Note that 
\begin{eqnarray}\label{someeq2}
(2^{2t+1}-1)(2^{2t+1}-3)=2^{2t+1}(2^{2t+1}-5)+(2^{2t+1}+3)\equiv (2^{2t+1}+3)\;(\mbox{mod}\;3\cdot 2^{2t+1}).
\end{eqnarray}
Thus, multiplying both sides of the congruence in \eqref{inv4} by $(2^{2t+1}-3)$ and using \eqref{someeq2} we get
\begin{eqnarray*}
m(2^{2t+1}+3)\equiv \pm (2^{2t+1}-3)(2^{2t+1}+3)\;(\mbox{mod}\;3\cdot 2^{2t+1}), 
\end{eqnarray*}
and since $(2^{2t+1}+3, 3\cdot 2^{2t+1})=1$, this yields
\begin{eqnarray}
m\equiv \pm(2^{2t+1}-3)\;(\mbox{mod}\;3\cdot 2^{2t+1}),
\end{eqnarray}
which implies that $m\in C_1$. Combining all of the above, we see that for $t$ even, $mp\in C_1$ if and only if $m\in C_2$ and $mp\in C_2$ if and only if $m\in C_1$. In a similar way, it follows that for $t$ odd, $mp\in C_1$ if and only if $m\in C_2$ and $mp\in C_2$ if and only if $m\in C_1$. These imply that for primes $p\in\mathcal{P}_3$
\begin{eqnarray}\label{srev1}
\chi_t(mp)=-\chi_t(m),
\end{eqnarray}
for all $m$ such that $\chi_t(mp)\neq 0$. Using the same ideas as above, one shows for primes $p\in\mathcal{P}_4$ and $m$ such that $\chi_t(mp)\neq 0$ that
\begin{eqnarray}\label{srev2}
\chi_t(mp)=-\chi_t(m).
\end{eqnarray}
The result now follows from \eqref{spre}, \eqref{srev1} and \eqref{srev2}.
\end{proof}
Next, we prove a crucial result for $\mathscr{F}_t(q)$, similar to Garvan's result \cite[Theorem 1.12, pp. 5]{G} for $F(q)$. 
\begin{lem}\label{divcoeff}
Assume $t\geq 2$ and set $a=(2^{t+1}-3)^2, b=3\cdot 2^{t+2}$. Let $p\in\mathcal{P}_1\cup\mathcal{P}_2\cup\mathcal{P}_3\cup\mathcal{P}_4$ where $\mathcal{P}_1, \mathcal{P}_2, \mathcal{P}_3$ and $\mathcal{P}_4$ are as in \emph{\textbf{(I)} - \textbf{(IV)}}. Let $1\leq i_0\leq p-1$ be such that $b\cdot i_0\equiv -a\;(\emph{mod}\;p)$. Then we have
\begin{eqnarray*}
\alpha_t(p,n,i_0,k)=\left\{
\begin{array}{cc}
    p\cdot \xi_{p,\lfloor ap/b\rfloor+\lfloor C_{a,b,p}/p\rfloor,1,t}(n), & \emph{if}\;p\in\mathcal{P}_1\cup\mathcal{P}_2, \\
     -p\cdot \xi_{p,\lfloor ap/b\rfloor+\lfloor C_{a,b,p}/p\rfloor,1,t}(n), & \emph{if}\;p\in\mathcal{P}_3\cup\mathcal{P}_4, 
\end{array}\right.
\end{eqnarray*}
for $0\leq k\leq n-1$ and where $\xi_{r,s,N,t}(n)$ are defined as in \emph{\eqref{gFT2}} and $C_{a,b,p}$ is as in Lemma \emph{\ref{ls}}. Equivalently, we have
\begin{eqnarray*}
\mathcal{A}_{p,t}(pn-1,i_0,q)=\left\{\begin{array}{cc}
     pq^{\lfloor ap/b\rfloor+\lfloor C_{a,b,p}/p\rfloor}\mathscr{F}_t(q^p,pn-1)+(1-q)^n\beta_{p,t}(n,i_0,q),&  \emph{if}\;p\in\mathcal{P}_1\cup\mathcal{P}_2,\\
     -pq^{\lfloor ap/b\rfloor+\lfloor C_{a,b,p}/p\rfloor}\mathscr{F}_t(q^p,pn-1)+(1-q)^n\gamma_{p,t}(n,i_0,q),& \emph{if}\;p\in\mathcal{P}_3\cup\mathcal{P}_4, 
\end{array}\right.
\end{eqnarray*}
where $\beta_{p,t}(n,i_0,q), \gamma_{p,t}(n,i_0,q)\in\mathbb{Z}[q]$.
\end{lem}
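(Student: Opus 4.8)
The plan is to adapt Garvan's argument for $F$ (and Straub's prime-power refinement) to $\mathscr{F}_t$, the new inputs being the asymptotic expansion at roots of unity from Lemma~\ref{Coeffs} and the character identities of Lemmas~\ref{setp} and~\ref{signpre}. Write $\sigma:=\lfloor ap/b\rfloor+\lfloor C_{a,b,p}/p\rfloor$ for the shift appearing in the conclusion. First I would extract the $i_0$-th dissection component by orthogonality of $p$-th roots of unity, exactly as in the proof of Lemma~\ref{eqcoeff}:
\[
\mathcal{A}_{p,t}(pn-1,i_0,1-q)=\frac{1}{p}\sum_{k=0}^{p-1}\zeta_p^{-i_0k}(1-q)^{-i_0/p}\,\mathscr{F}_t\!\left(\zeta_p^{k}(1-q)^{1/p};pn-1\right).
\]
By Lemma~\ref{eqcoeff} the coefficients $\alpha_t(p,n,i_0,k)$ stabilise for $0\le k\le n-1$, so it suffices to determine them through the $u\to0^+$ asymptotics obtained by setting $1-q=e^{-u}$, whence $(1-q)^{1/p}=e^{-u/p}$ and the prefactor becomes $e^{i_0u/p}$.

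The core step is to move between the $q$-power coefficients $\alpha_t(p,\cdot,i_0,\ell)$ and the Taylor coefficients in $u$. The Andrews--Sellers array encodes the substitution $q=1-e^{-u}$ together with the fractional prefactor, so that the $n$-th $u$-coefficient of the component equals $\sum_{\ell=0}^{n}C(n,i_0,\ell,p)\,\alpha_t(p,\cdot,i_0,\ell)$. Computing the same coefficient from the other side, I would insert the analogous expansions $\mathscr{F}_t(\zeta_p^{k}e^{-u/p})=\sum_{\mu\ge0} b_{\mu,t}(\zeta_p^{k})\,u^\mu/(\mu!\,p^\mu)$, with $b_{\mu,t}$ as in Lemma~\ref{Coeffs}, and carry out the sum over $k$. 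Orthogonality collapses the inner sum $\sum_i\gamma_t(j,i)\zeta_p^{ik}$ onto the single residue $i\equiv i_0\pmod p$, leaving $\gamma_t(j,i_0)$. The decisive simplification is that for $i=i_0$ the condition $(m^2-a)/b\equiv i_0\pmod p$ together with $b\,i_0\equiv -a\pmod p$ forces $m^2\equiv0\pmod p$, i.e.\ $p\mid m$; writing $m=pm'$ and using $N_t=3\cdot2^{t+1}p$, the Bernoulli argument $m/N_t=m'/(3\cdot2^{t+1})$ sheds its factor of $p$, so the surviving sum is exactly the character and Bernoulli data of $\mathscr{F}_t$ with argument a $p$-th power. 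Simultaneously $\chi_t(m)=\chi_t(pm')=\pm\chi_t(m')$ by Lemma~\ref{signpre}, which is where the dichotomy $p\in\mathcal{P}_1\cup\mathcal{P}_2$ versus $p\in\mathcal{P}_3\cup\mathcal{P}_4$ enters.

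Having put the $n$-th asymptotic coefficient into the shape $(-1)^nz^n\sum_k\binom{n}{k} X(k)$ with $z=a/b$ and $X(k)$ a fixed sign times the level-one coefficients of $\mathscr{F}_t$, I would invoke Lemma~\ref{Gar} with this $z$ and with its parameter set to $\sigma$; the identity $(p^2-1)z-\sigma p=i_0$ holds by Lemma~\ref{ls} and is legitimised by $p^2\equiv1\pmod b$ (Lemma~\ref{setp}), which makes $C_{a,b,p}$ integral. The inversion returns $\alpha_t(p,\cdot,i_0,\ell)$ as an explicit Stirling-number expression, and the powers of $p$ contributed by $N_t=3\cdot2^{t+1}p$ combine with the $p^{\,j-2k}$ of Lemma~\ref{Gar} to leave a single overall factor $p$. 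Recognising the resulting series as the $q$-coefficients of $\pm p\,(1-q)^{\sigma}\mathscr{F}_t((1-q)^p)$ yields the stated value, with sign $+$ on $\mathcal{P}_1\cup\mathcal{P}_2$ and $-$ on $\mathcal{P}_3\cup\mathcal{P}_4$; the two displayed forms of the lemma are then equivalent on extracting the coefficient of $q^k$, the $(1-q)^n$-multiple contributing nothing below degree $n$, and integrality of $\beta_{p,t},\gamma_{p,t}$ follows since every coefficient in sight lies in $\mathbb{Z}$.

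The main obstacle I expect is the rigorous matching in the third paragraph: verifying that the $u$-asymptotic coefficient assembled from the Bernoulli and character data of Lemma~\ref{Coeffs} really equals the exact hypothesis form $(-1)^nz^n\sum_k\binom{n}{k} X(k)$ of Lemma~\ref{Gar}, and that the inverted expression is genuinely the coefficient series of $(1-q)^{\sigma}\mathscr{F}_t((1-q)^p)$ rather than merely agreeing to low order. The delicate part is the ``strong'' divisibility, namely that the tail discarded by truncating at height $pn-1$ is divisible by the full power $(1-q)^n$; here the estimate $(\zeta_p^{k}(1-q)^{1/p};\zeta_p^{k}(1-q)^{1/p})_m=O(q^{\lceil m/p\rceil})$ from Lemma~\ref{eqcoeff} must be tracked through the inversion to guarantee the exponent $n$, and the bookkeeping of the exact power of $p$ must be carried out carefully.
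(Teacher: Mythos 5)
Your proposal follows essentially the same route as the paper: extract the $i_0$-component by orthogonality, use Lemma \ref{Coeffs} to reduce the resulting linear system in the Andrews--Sellers array to $\gamma_t(\ell,i_0)$, observe that $b\,i_0\equiv -a\pmod p$ forces $p\mid m$ so that Lemma \ref{signpre} produces the sign dichotomy and $\gamma_t(\ell,i_0)=\pm p^{2\ell+1}c_{\ell,t}(1)/a^{\ell}$, then invert via Lemma \ref{Gar} with $z=a/b$ and parameter $\sigma=\lfloor ap/b\rfloor+\lfloor C_{a,b,p}/p\rfloor$ and resum the Stirling generating function to recognise $\pm p(1-q)^{\sigma}\mathscr{F}_t((1-q)^p)$. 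The technical points you flag as obstacles are exactly the ones the paper resolves by the identity $b_{j,t}(1)=(a/b)^j\sum_k\binom{j}{k}c_{k,t}(1)/a^k$ and by Lemma \ref{eqcoeff}, so the argument is correct and complete in outline.
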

\begin{proof}
As in \cite{AKL,AS, G}, for any $p$th root of unity $\zeta_p$, we have
\begin{eqnarray}\label{coeff6}
\ \ \ \ \ \ \ \ b_{n,t}(\zeta_p)=\left.\left(\dfrac{d}{du}\right)^n\mathscr{F}_t(\zeta_pe^{-u})\right|_{u=0}=(-1)^n\left.\left(q\dfrac{d}{dq}\right)^n\mathscr{F}_t(q)\right|_{q=\zeta_p}=(-1)^n\left.\left(q\dfrac{d}{dq}\right)^n\mathscr{F}_t(q;N)\right|_{q=\zeta_p},
\end{eqnarray}
for $N\geq p(n+1)-1$. Then from (\ref{p-dis}), we find using \cite[Lemma 2.4, pp. 5]{AS} that
\begin{eqnarray}\label{coeff7}
\left(q\dfrac{d}{dq}\right)^n\mathscr{F}_t(q;p(n+1)-1)=\sum_{j=0}^n\sum_{i=0}^{p-1}C(n,i,j,p)q^{i+jp}\mathcal{A}_{p,t}^{(j)}(p(n+1)-1,i,q^p).
\end{eqnarray}
We now extract coefficients of exponents congruent to $i$\;$(\mbox{mod}\;p)$ of $q$ using orthogonality of roots of unity in (\ref{coeff7}) to find that
\begin{eqnarray}\label{coeff8}
\ \ \ \ \ \ \sum_{j=0}^nC(n,i,j,p)q^{i+jp}\mathcal{A}_{p,t}^{(j)}(p(n+1)-1,i,q^p)&=&\dfrac{1}{p}\sum_{j=0}^{p-1}\zeta_p^{-ji}\left.\left(\left(q\dfrac{d}{dq}\right)^n\mathscr{F}_t(q;p(n+1)-1)\right)\right|_{q=\zeta_p^{j}q}.
\end{eqnarray}
At this point we put $q=\zeta_p$ on both sides of (\ref{coeff8}) to find using (\ref{coeff6}) that
\begin{eqnarray}\label{coeff9}
\ \ \ \ \sum_{j=0}^nC(n,i,j,p)\mathcal{A}_{p,t}^{(j)}(p(n+1)-1,i,1)=\dfrac{(-1)^n}{p}\sum_{j=0}^{p-1}\zeta_p^{-(j+1)i}b_{n,t}(\zeta_p^{j+1})=\dfrac{(-1)^n}{p}\sum_{j=0}^{p-1}\zeta_p^{-ji}b_{n,t}(\zeta_p^{j}).
\end{eqnarray}
Using Lemma \ref{Coeffs} in (\ref{coeff9}) and by orthogonality we obtain
\begin{eqnarray}\label{lincom}
\sum_{j=0}^nC(n,i,j,p)\mathcal{A}_{p,t}^{(j)}(p(n+1)-1,i,1)&=&\dfrac{(2^{t+1}-3)^{2n}(-1)^n}{p(3\cdot 2^{t+2})^n}\sum_{j=0}^{p-1}\zeta_p^{-ji}\sum_{\ell=0}^n\binom{n}{\ell}\sum_{k=0}^{p-1}\gamma_t(\ell,k)\zeta_p^{jk}\notag\\
&=&\dfrac{(2^{t+1}-3)^{2n}(-1)^n}{p(3\cdot 2^{t+2})^n}\sum_{\ell=0}^n\binom{n}{\ell}\sum_{k=0}^{p-1}\gamma_t(\ell,k)\sum_{j=0}^{p-1}\zeta_p^{j(k-i)}\notag\\
&=&\dfrac{(2^{t+1}-3)^{2n}(-1)^n}{(3\cdot 2^{t+2})^n}\sum_{\ell=0}^n\binom{n}{\ell}\gamma_t(\ell,i).
\end{eqnarray}
Let $a=(2^{t+1}-3)^2,\;b=3\cdot 2^{t+2}$. Then we clearly see that $a$ and $b$ satisfy the conditions in Lemma \ref{ls}. Also, Lemma \ref{setp} implies that $p^2\equiv 1\;(\mbox{mod}\;3\cdot 2^{t+2})$ if and only if $p\in\mathcal{P}_1\cup\mathcal{P}_2\cup\mathcal{P}_3\cup\mathcal{P}_4$. Thus, if we denote by $i_0$ the least solution of the congruence $b\cdot i_0\equiv -a\;(\mbox{mod}\;p)$. Then we have 
\begin{eqnarray}
i_0=C_{a,b,p}-p\left\lfloor\dfrac{C_{a,b,p}}{p}\right\rfloor.
\end{eqnarray}
Next, we note that 
\begin{eqnarray}
\dfrac{m^2-a}{b}\equiv i_0\;(\mbox{mod}\;p)\;\mbox{if and only if}\;m\equiv 0\;(\mbox{mod}\;p).
\end{eqnarray}
Hence we have
\begin{eqnarray}\label{gamma1}
\gamma_t(\ell,i_0)&=&\dfrac{(-1)^\ell N_t^{2\ell+1}}{(2\ell+2)a^\ell}\sum_{0\leq mp\leq N_t/2}\chi_t(mp)B_{2\ell+2}(mp/N_t)\notag\\
&=&\dfrac{(-1)^\ell N_t^{2\ell+1}}{(2\ell+2)a^{\ell}}\sum_{0\leq mp\leq N_t/2}\chi_t(mp)B_{2\ell+2}\left(\dfrac{m}{3\cdot 2^{t+1}}\right).
\end{eqnarray}
Using Lemma \ref{signpre} in \eqref{gamma1} we obtain:
\begin{eqnarray}\label{gamma2}
\gamma_t(\ell,i_0)=\left\{\begin{array}{cc}
     \dfrac{(-1)^\ell N_t^{2\ell+1}}{(2\ell+2)a^{\ell}}\displaystyle\sum_{0\leq m\leq 3\cdot 2^t}\chi_t(m)B_{2\ell+2}\left(\dfrac{m}{3\cdot 2^{t+1}}\right),& \mbox{if}\;p\in\mathcal{P}_1\cup\mathcal{P}_2, \\
     -\dfrac{(-1)^\ell N_t^{2\ell+1}}{(2\ell+2)a^{\ell}}\displaystyle\sum_{0\leq m\leq 3\cdot 2^t}\chi_t(m)B_{2\ell+2}\left(\dfrac{m}{3\cdot 2^{t+1}}\right),&  \mbox{if}\;p\in\mathcal{P}_3\cup\mathcal{P}_4.
\end{array}\right.
\end{eqnarray}
We prove the result for primes $p\in\mathcal{P}_1\cup\mathcal{P}_2$ as the other case for primes $p\in\mathcal{P}_3\cup\mathcal{P}_4$ is similar. Using Lemma \ref{Coeffs} with $\zeta_p=1$, we immediately see from (\ref{gamma2}) that 
\begin{eqnarray}\label{eq2}
\gamma_t(\ell,i_0)=\dfrac{p^{2\ell+1}}{a^\ell}c_{\ell,t}(1).
\end{eqnarray}
Apply Lemma \ref{Gar} in (\ref{lincom}) with $z=a/b$ and $m=\lfloor ap/b\rfloor+\lfloor C_{a,b,p}/p\rfloor$ with $i=i_0$ and then use (\ref{eq2}) to get
\begin{eqnarray}\label{Apt}
\mathcal{A}_{p,t}^{(n)}(p(n+1)-1,i_0,1)&=&(-1)^n\sum_{k=0}^n\sum_{j=k}^n\binom{j}{k}s_1(n,j,\lfloor ap/b\rfloor+\lfloor C_{a,b,p}/p\rfloor)p^{j-2k}
\dfrac{p^{2k+1}}{a^k}c_{k,t}(1)
\left(\dfrac{a}{b}\right)^j\notag\\
&=&p(-1)^n\sum_{k=0}^n\sum_{j=k}^n\binom{j}{k}s_1(n,j,\lfloor ap/b\rfloor+\lfloor C_{a,b,p}/p\rfloor)p^{j}\dfrac{c_{k,t}(1)}{a^{k}}
\left(\dfrac{a}{b}\right)^j.
\end{eqnarray}
From (\ref{sum1}), it is clear that
\begin{eqnarray}\label{Aptrel}
\alpha_t(p,n+1,i_0,n)=\dfrac{(-1)^n}{n!}\mathcal{A}_{p,t}^{(n)}(p(n+1)-1,i_0,1).
\end{eqnarray}
Thus (\ref{Apt}) and (\ref{Aptrel}) yield
\begin{eqnarray}\label{gf1}
\sum_{n=0}^\infty\alpha_t(p,n+1,i_0,n)q^n&=& p\sum_{n=0}^\infty\dfrac{q^n}{n!}\sum_{k=0}^n\sum_{j=k}^n\binom{j}{k}s_1(n,j,\lfloor ap/b\rfloor+\lfloor C_{a,b,p}/p\rfloor)p^{j}\dfrac{c_{k,t}(1)}{a^{k}}\left(\dfrac{a}{b}\right)^j\notag\\
&=&p\sum_{j=0}^\infty\sum_{k=0}^j\left(\sum_{n=j}^\infty s_1(n,j,\lfloor ap/b\rfloor+\lfloor C_{a,b,p}/p\rfloor)\dfrac{q^n}{n!}\right)\left(\dfrac{ap}{b}\right)^j\binom{j}{k}\dfrac{c_{k,t}(1)}{a^k}.
\end{eqnarray}
Next, we note that
\begin{eqnarray}\label{gf2}
\sum_{n=j}^\infty s_1(n,j,k)\dfrac{q^n}{n!}=(1-q)^k\dfrac{(-\log(1-q))^j}{j!}.
\end{eqnarray}
Hence, using (\ref{gf2}) in (\ref{gf1}) we get
\begin{eqnarray}\label{gf3}
\sum_{n=0}^\infty\alpha_t(p,n+1,i_0,n)q^n&=&p(1-q)^{\lfloor ap/b\rfloor+\lfloor C_{a,b,p}/p\rfloor}\sum_{j=0}^\infty\sum_{k=0}^j\dfrac{(-ap\log(1-q))^j}{b^jj!}\binom{j}{k}\dfrac{c_{k,t}(1)}{a^k}\notag\\
&=&p(1-q)^{\lfloor ap/b\rfloor+\lfloor C_{a,b,p}/p\rfloor}\sum_{j=0}^\infty\left(\sum_{k=0}^j\dfrac{1}{a^k}\binom{j}{k}c_{k,t}(1)\right)\dfrac{(-ap\log(1-q))^j}{b^jj!}.
\end{eqnarray}
It is easy to observe via the generating functions in (\ref{coeff1}) that 
\begin{eqnarray}\label{eq3}
b_{j,t}(1)=\left(\dfrac{a}{b}\right)^j\sum_{k=0}^j\dfrac{1}{a^k}\binom{j}{k}c_{k,t}(1).
\end{eqnarray}
Thus using (\ref{eq3}) in (\ref{gf3}) we get
\begin{eqnarray}
\sum_{n=0}^\infty\alpha_t(p,n+1,i_0,n)q^n=p(1-q)^{\lfloor ap/b\rfloor+\lfloor C_{a,b,p}/p\rfloor}\sum_{j=0}^\infty \dfrac{b_{j,t}(1)}{j!}(-p\log(1-q))^j,
\end{eqnarray}
which on using Lemma \ref{Coeffs} yields
\begin{eqnarray}\label{gcoeff1}
\sum_{n=0}^\infty\alpha_t(p,n+1,i_0,n)q^n&=&p(1-q)^{\lfloor ap/b\rfloor+\lfloor C_{a,b,p}/p\rfloor}\mathscr{F}_t(\exp(p\log(1-q)))\notag\\&=&p(1-q)^{\lfloor ap/b\rfloor+\lfloor C_{a,b,p}/p\rfloor}\mathscr{F}_t((1-q)^p).
\end{eqnarray}
The theorem now follows from Lemma \ref{eqcoeff}, and equations (\ref{gcoeff1}) and (\ref{gFT2}) with $(r,s,N)=(p,\lfloor ap/b\rfloor+\lfloor C_{a,b,p}/p\rfloor,1)$.
\end{proof}
We next record a crucial divisibility property of the polynomials $\mathcal{A}_{p,t}(N,i,q)$ which follows in a straightforward manner using \cite[Theorem 1.2, pp. 3]{AKL} and \cite[Prop. 2.4, pp. 7]{Be}.
\begin{lem}\label{Scott}
Let $u$ and $N$ be positive integers and $i\not\in S_{t,\chi_t}(u,1,0)$. Then
\begin{eqnarray*}
(q;q)_{\lambda(N,u)}\mid \mathcal{A}_{p,t}(N,i,q),
\end{eqnarray*}
where $\lambda(N,u)=\left\lfloor\frac{N+1}{u}\right\rfloor$. In particular, for any $1\leq k\leq \lambda(N,u)$ we have
\begin{eqnarray*}
(1-q^k)^{\lfloor\lambda(N,u)/k\rfloor}\mid \mathcal{A}_{p,t}(N,i,q).
\end{eqnarray*}
\end{lem}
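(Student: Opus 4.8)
The plan is to reduce the first divisibility directly to the strong divisibility theorem of Ahlgren--Kim--Lovejoy \cite[Theorem 1.2]{AKL}, and then to deduce the ``in particular'' assertion from it by an elementary factorization of $(q;q)_{\lambda(N,u)}$.

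First I would recall from \cite[Prop. 2.4]{Be} that, up to the elementary prefactor $(-1)^{h''(t)}q^{-h'(t)}$, the series $\mathscr{F}_t(q)$ agrees at roots of unity with the partial theta function $-\tfrac12\sum_{n\geq 1}n\,\chi_t(n)\,q^{(n^2-(2^{t+1}-3)^2)/(3\cdot 2^{t+2})}$, whose coefficient function $\chi_t$ is periodic of modulus $3\cdot 2^{t+1}$ and of mean value $0$ by \cite[Prop. 2.2]{Be}. This places the truncation $\mathscr{F}_t(q;N)$ of \eqref{trunc} and its dissection polynomials $\mathcal{A}_{p,t}(N,i,q)$ squarely within the hypotheses of \cite[Theorem 1.2]{AKL}, which governs strange series of precisely this shape.

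Next I would match the exceptional residues. The theorem of \cite{AKL} asserts that $(q;q)_{\lfloor(N+1)/u\rfloor}$ divides the $i$-th component of the dissection modulo $u$ whenever $i$ is not one of the residues modulo $u$ realized by the theta exponents. For $\mathscr{F}_t$ those exponents are $(n^2-(2^{t+1}-3)^2)/(3\cdot 2^{t+2})$ taken over $n$ with $\chi_t(n)\neq 0$, so the exceptional residues are exactly the elements of $S_{t,\chi_t}(u,1,0)$. Hence for $i\notin S_{t,\chi_t}(u,1,0)$ the theorem yields $(q;q)_{\lambda(N,u)}\mid\mathcal{A}_{p,t}(N,i,q)$ with $\lambda(N,u)=\lfloor(N+1)/u\rfloor$, which is the first claim.

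Finally, the second claim is purely elementary. Writing $\lambda=\lambda(N,u)$ and fixing $k$ with $1\leq k\leq\lambda$, each of the $\lfloor\lambda/k\rfloor$ multiples $k,2k,\dots,\lfloor\lambda/k\rfloor\,k$ is $\leq\lambda$, and $1-q^k$ divides $1-q^{jk}$ for every $j$; since these factors all occur in $(q;q)_\lambda=\prod_{j=1}^\lambda(1-q^j)$, we get $(1-q^k)^{\lfloor\lambda/k\rfloor}\mid(q;q)_\lambda$, and the first part then gives $(1-q^k)^{\lfloor\lambda(N,u)/k\rfloor}\mid\mathcal{A}_{p,t}(N,i,q)$. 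The only genuine work is the identification step: verifying that $\mathscr{F}_t$ fits the structural hypotheses of \cite{AKL} and that its exceptional residue set is exactly $S_{t,\chi_t}(u,1,0)$. This is where the precise normalization (the character $\chi_t$ of modulus $3\cdot 2^{t+1}$, the denominator $3\cdot 2^{t+2}$, and the prefactor $q^{-h'(t)}$) must be tracked carefully, but it is already supplied by \cite[Props. 2.2 and 2.4]{Be}, so I expect no analytic difficulty beyond this bookkeeping.
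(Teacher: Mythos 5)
Your proposal is correct and follows exactly the route the paper takes: the paper gives no written proof, only the remark that the lemma ``follows in a straightforward manner using \cite[Theorem 1.2]{AKL} and \cite[Prop. 2.4]{Be},'' which is precisely your reduction. Your added details — matching the exceptional residues of the theta exponents with $S_{t,\chi_t}(u,1,0)$ and the elementary observation that $(1-q^k)^{\lfloor\lambda/k\rfloor}$ divides $(q;q)_\lambda$ — correctly fill in the bookkeeping the paper leaves implicit.
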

Now, we require a result analogous to \cite[Lemma 3.3, pp. 8]{Straub}. 
\begin{lem}\label{higherdiv}
Let $\lambda, k\in\mathbb{N}$. Then for all integers $r$ and $m$ such that $m\geq \lambda$ we have
\begin{eqnarray*}
(1-(\zeta_k-q)^{krp})^m\equiv O(q^{\lambda-1+p(m-\lambda-1)})\;\;\emph{(mod\;$p^\lambda$)}.
\end{eqnarray*}
\end{lem}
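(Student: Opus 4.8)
The plan is to first strip the root of unity off the base of the power. Since $\zeta_k^{k}=1$ we have $\zeta_k^{krp}=(\zeta_k^{k})^{rp}=1$, so writing $\zeta_k-q=\zeta_k(1-\zeta_k^{-1}q)$ gives
\[
(\zeta_k-q)^{krp}=\zeta_k^{krp}(1-\zeta_k^{-1}q)^{krp}=(1-\omega q)^{np},
\]
where $\omega:=\zeta_k^{-1}$ is a unit of $\mathbb{Z}[\zeta_k]$ and $n:=kr\in\mathbb{Z}$. Thus it suffices to bound $\bigl(1-(1-\omega q)^{np}\bigr)^m$, and the entire argument takes place in $\mathbb{Z}[\zeta_k][[q]]$, interpreting $(1-\omega q)^{np}$ through the generalized binomial series when $r<0$ (legitimate, since $1-\omega q$ is a unit of the power series ring).

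Next I would extract the mod-$p$ structure by a Frobenius argument. Because $\binom{p}{i}\equiv 0\pmod p$ for $0<i<p$, the freshman's dream gives $(1-\omega q)^{p}\equiv 1-\omega^{p}q^{p}\pmod p$ in $\mathbb{Z}[\zeta_k][[q]]$; raising to the $n$th power (and inverting when $n<0$, which preserves congruences between unit power series) yields
\[
1-(1-\omega q)^{np}\equiv 1-(1-\omega^{p}q^{p})^{n}\pmod p .
\]
The right-hand side is a power series in $q^{p}$ with vanishing constant term. I then split $1-(1-\omega q)^{np}=A(q)+B(q)$, putting the terms whose exponent is divisible by $p$ into $A$ and the remaining terms into $B$. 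The displayed congruence forces every coefficient of $B$ to be divisible by $p$, so $B=p\widetilde B$ with $\widetilde B\in\mathbb{Z}[\zeta_k][[q]]$; here I use only that $\omega^{i}$ is a unit and $\binom{np}{i}\in\mathbb{Z}$. By construction $A$ involves only exponents $\geq p$, so $\mathrm{ord}_q A\geq p$, while the smallest admissible exponent in $B$ is $1$, so $\mathrm{ord}_q\widetilde B\geq 1$.

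With this decomposition I would expand
\[
\bigl(1-(1-\omega q)^{np}\bigr)^m=(A+p\widetilde B)^m=\sum_{\ell=0}^{m}\binom{m}{\ell}p^{\ell}A^{m-\ell}\widetilde B^{\ell}.
\]
Each term with $\ell\geq\lambda$ carries a factor $p^{\ell}$ and so vanishes modulo $p^{\lambda}$. For $0\leq\ell\leq\lambda-1$ the surviving term has $q$-order at least $p(m-\ell)+\ell=pm-(p-1)\ell$; as this is decreasing in $\ell$ and $m\geq\lambda$ ensures the index $\ell=\lambda-1$ actually occurs, the minimum is $pm-(p-1)(\lambda-1)$. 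Hence $\bigl(1-(1-\omega q)^{np}\bigr)^m\equiv O\bigl(q^{\,pm-(p-1)(\lambda-1)}\bigr)\pmod{p^{\lambda}}$. A direct computation gives $pm-(p-1)(\lambda-1)-\bigl(\lambda-1+p(m-\lambda-1)\bigr)=2p>0$, so this estimate is strictly stronger than the claimed one, and the lemma follows.

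The main obstacle will be the negative-exponent case $r<0$, where $(1-\omega q)^{np}$ is a non-terminating series and no explicit finite binomial expansion is available; the Frobenius congruence above circumvents this entirely, being insensitive to the sign of $n$ once one checks that reduction modulo $p$ commutes with inversion of unit power series. The remaining points—integrality of $\widetilde B$ over $\mathbb{Z}[\zeta_k]$ and the two $q$-order estimates—are then routine.
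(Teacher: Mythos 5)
Your proof is correct and follows essentially the same route as the paper: both reduce to a decomposition of $1-(\zeta_k-q)^{krp}$ into a piece divisible by $p$ of $q$-order at least $1$ plus a piece of $q$-order at least $p$ (the paper via the expansion $(\zeta_k-q)^p=\zeta_k^p+pqh_{1,\zeta_k}(q)-q^p$, you via the Frobenius congruence), and then expand the $m$th power binomially, discarding terms with $p$-exponent at least $\lambda$. Your bookkeeping even yields the sharper exponent $\lambda-1+p(m-\lambda+1)$, which of course implies the stated (weaker) bound.
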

\begin{proof}
Using the binomial expansion we have
\begin{eqnarray}
(\zeta_k-q)^{p}=\zeta_k^p+pqh_{1,\zeta_k}(q)-q^p,
\end{eqnarray}
where $h_{1,\zeta_k}(q)\in\mathbb{Z}[\zeta_k][q]$. Thus for any $r\in\mathbb{Z}$ 
\begin{eqnarray}
1-(\zeta_k-q)^{kpr}&=&1-(\zeta_k^p+pq\cdot h_{1,\zeta_k}(q)-q^p)^{kr}=pqh_{2,\zeta_k}(q)+q^ph_{3,\zeta_k}(q),
\end{eqnarray}
where $h_{2,\zeta_k}(q), h_{3,\zeta_k}(q)\in\mathbb{Z}[\zeta_k][[q]]$. Finally, it follows (as in \cite{Straub}) modulo $p^\lambda$ that 
\begin{eqnarray}
(1-(\zeta_k-q)^{kpr})^m\equiv O(q^{\lambda-1+p(m-\lambda-1)}).
\end{eqnarray}
\end{proof}
We end this section with Kummer's theorem \cite{Kum} and a couple of important applications that we will require in the proofs of the main results. 
\begin{theorem}[Kummer]\label{Kum}
Let $p$ be a prime, and $n, k$ integers such that $k\geq 0$. Then the $p$-adic valuation of $\binom{n}{k}$ is equal to the number of carries when adding $k$ and $n-k$.  
\end{theorem}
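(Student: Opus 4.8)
The plan is to reduce Kummer's theorem to Legendre's formula for the $p$-adic valuation of a factorial and then to convert a difference of base-$p$ digit sums into a count of carries. Throughout I take $0\le k\le n$, the range in which the phrase ``adding $k$ and $n-k$ in base $p$'' carries its usual meaning. First I would recall Legendre's formula in the sharp form
\[
\nu_p(n!)=\frac{n-s_p(n)}{p-1},
\]
where $s_p(n)$ denotes the sum of the base-$p$ digits of $n$. This is obtained from $\nu_p(n!)=\sum_{i\ge 1}\lfloor n/p^i\rfloor$ by writing $n=\sum_{k\ge 0}n_kp^k$ and summing the geometric series $\sum_{i\ge 1}\lfloor n/p^i\rfloor=\sum_{k\ge 1}n_k(p^{k-1}+\cdots+1)=\sum_{k\ge 0}n_k\frac{p^k-1}{p-1}=\frac{n-s_p(n)}{p-1}$.

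Next I would apply this to $\binom{n}{k}=n!/(k!\,(n-k)!)$ and use additivity of the valuation to obtain
\[
\nu_p\binom{n}{k}=\frac{\bigl(n-s_p(n)\bigr)-\bigl(k-s_p(k)\bigr)-\bigl((n-k)-s_p(n-k)\bigr)}{p-1}=\frac{s_p(k)+s_p(n-k)-s_p(n)}{p-1}.
\]
It then remains to identify the numerator. The key step is the identity
\[
s_p(k)+s_p(n-k)-s_p(n)=(p-1)\,c,
\]
where $c$ is the number of carries produced when $k$ and $n-k$ are added in base $p$.

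To establish this identity I would analyse the schoolbook addition algorithm position by position: at each digit place one forms the sum of the two digits together with the incoming carry, and whenever this exceeds $p-1$ one subtracts $p$ and emits a carry of $1$ to the next place. Each such carry removes $p$ from the current position and adds $1$ to the next, so it decreases the total digit sum by exactly $p-1$, while positions without a carry leave the running digit sum unchanged; summing the contributions over all positions yields the identity. Combining it with the previous display gives $\nu_p\binom{n}{k}=c$, as claimed.

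The main obstacle is the bookkeeping in the carry identity, since carries can cascade and must be tracked jointly with the digit they act on. The cleanest way to make the argument rigorous is an induction on the number of base-$p$ digits in which the incoming carry bit is carried along as part of the inductive hypothesis; this reduces the whole proof to the two elementary facts recorded above, namely Legendre's formula and the per-carry digit-sum decrement.
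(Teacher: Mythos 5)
Your proof is correct: the reduction of Kummer's theorem to Legendre's formula $\nu_p(n!)=\frac{n-s_p(n)}{p-1}$, followed by the identity $s_p(k)+s_p(n-k)-s_p(n)=(p-1)c$ established by tracking the per-carry decrement of the running digit sum, is the standard modern argument, and your restriction to $0\le k\le n$ is the right reading of the statement. Note that the paper itself supplies no proof of this theorem --- it is quoted as a classical result with a citation to Kummer's 1852 paper --- so there is no in-paper argument to compare against; your write-up is a sound and complete substitute.
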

\begin{lem}\label{xicon1}
Let $p$ be a prime, $r$ an integer, $j\in\{1,2,\cdots, p-1\}$ and $t, N\in\mathbb{N}$. If $s$ is an integer such that
\begin{eqnarray}\label{c2}
\binom{s}{p^2-j}\equiv 0\;(\emph{mod}\;p),
\end{eqnarray}
then, for any $m, \lambda\in\mathbb{N}$ we have
\begin{eqnarray*}
\xi_{p^2r,s,N,t}(p^\lambda m-j)\equiv 0\;(\emph{mod}\;p^{\lambda-1}).
\end{eqnarray*}
\end{lem}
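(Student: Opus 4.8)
The plan is to expand the coefficient $\xi_{p^2r,s,N,t}(p^\lambda m - j)$ using the convolution formula recorded in the Notations section, namely
\begin{eqnarray*}
\xi_{p^2r,s,N,t}(n) = \sum_{\ell=0}^{s} (-1)^\ell \zeta_N^{s-\ell}\binom{s}{\ell}\xi_{p^2r,N,t}(n-\ell),
\end{eqnarray*}
specialized to $n = p^\lambda m - j$. The idea is that the relevant binomial coefficients $\binom{s}{\ell}$ are themselves divisible by $p$ except for a controlled set of indices $\ell$, and that Kummer's theorem (Theorem \ref{Kum}) tells us exactly when a carry occurs. First I would analyze, via Kummer's theorem, the $p$-adic valuation of $\binom{s}{\ell}$ as $\ell$ ranges over $\{0,1,\dots,s\}$, and relate the hypothesis \eqref{c2} that $\binom{s}{p^2-j}\equiv 0\pmod p$ to the base-$p$ digits of $s$. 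The congruence $\binom{s}{p^2-j}\equiv 0$ forces a carry when adding $p^2-j$ to $s-(p^2-j)$ in base $p$, and this carry information should propagate to control $\binom{s}{\ell}$ for the $\ell$ with $\ell \equiv j \pmod{\text{(some power of }p)}$ that actually matter.

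The second ingredient is the inner factor $\xi_{p^2r,N,t}(p^\lambda m - \ell - j)$, where the first subscript is $p^2 r$. Here I expect to invoke the higher divisibility estimate of Lemma \ref{higherdiv} (with $k=N$ and the exponent $p^2 r$), which shows that for the series $\mathscr{F}_t((\zeta_N-q)^{p^2 r})$, high powers of $(1-(\zeta_N-q)^{Np^2 r})$ are simultaneously divisible by a power of $p$ and carry a large $q$-adic order. In effect the $p^2$ in the exponent means that expanding $\mathscr{F}_t((\zeta_N-q)^{p^2 r})$ produces coefficients whose $p$-adic valuation already grows; combined with the offset by $j \in \{1,\dots,p-1\}$ (so that we look below a multiple of $p^\lambda$), this should contribute a factor $p^{\lambda-1}$ to the relevant $\xi_{p^2r,N,t}(p^\lambda m - \ell - j)$ once the summation index $\ell$ is restricted appropriately. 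The interplay is that for each $\ell$, either $\binom{s}{\ell}$ supplies enough powers of $p$, or the coefficient $\xi_{p^2r,N,t}(p^\lambda m - \ell - j)$ does; the bookkeeping must ensure the sum of the two valuations is always at least $\lambda - 1$.

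Concretely, I would split the sum over $\ell$ according to the base-$p$ behavior of $\ell$ relative to $j$ and $p^2$. For those $\ell$ where no carry obstruction forces divisibility of $\binom{s}{\ell}$, the exponent $p^2 r$ in the inner coefficient (through Lemma \ref{higherdiv}) guarantees $\xi_{p^2r,N,t}(p^\lambda m - \ell - j) \equiv 0 \pmod{p^{\lambda-1}}$; for the remaining $\ell$, hypothesis \eqref{c2} together with Kummer's theorem forces $\binom{s}{\ell}\equiv 0 \pmod p$, and one must then combine this single factor of $p$ with a residual $p^{\lambda-2}$ coming from the inner term. The $\zeta_N$ coefficients stay in $\mathbb{Z}[\zeta_N]$ throughout, so the congruence is read in the sense defined at the end of the Notations section.

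The hard part will be the careful digit-chasing with Kummer's theorem to show that the single hypothesis \eqref{c2}, about the specific binomial $\binom{s}{p^2-j}$, actually controls \emph{all} the binomials $\binom{s}{\ell}$ arising in the convolution whose inner factor is not already divisible by the full $p^{\lambda-1}$. In other words, I must verify that the one carry guaranteed by \eqref{c2} is the \emph{only} place where the inner coefficient falls short by exactly one power of $p$, and that the offset $j\in\{1,\dots,p-1\}$ aligns the base-$p$ expansions so that this deficiency and the binomial's extra factor of $p$ occur simultaneously. This alignment between the $q$-adic truncation order in Lemma \ref{higherdiv} and the combinatorial carry condition is where the technical weight of the argument lies; the rest is assembling the valuation inequality $\nu_p\binom{s}{\ell} + \nu_p\big(\xi_{p^2r,N,t}(p^\lambda m-\ell-j)\big) \geq \lambda-1$ term by term.
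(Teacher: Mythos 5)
Your decomposition is genuinely different from the paper's, and it has a gap that I do not see how to close with the tools you cite. The paper does not use the convolution formula at all. Instead it observes that, since $\mathscr{F}_t(q)=\sum_{n\ge 0}(q)_n f_{n,t}(q)$ with $f_{n,t}\in\mathbb{Z}[q]$, the whole series $(\zeta_N-q)^s\mathscr{F}_t((\zeta_N-q)^{p^2r})$ collapses into $\sum_{n\ge 0}c_n(\zeta_N-q)^{p^2rn+s}$ with $c_n\in\mathbb{Z}$; hence the lemma reduces to the single binomial congruence $\binom{p^2a+s}{p^\lambda m-j}\equiv 0\pmod{p^{\lambda-1}}$ for every integer $a$. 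This is then settled by Kummer's Theorem \ref{Kum}: the base-$p$ digits of $k=p^\lambda m-j$ are $k_0=p-j$ and $k_1=\cdots=k_{\lambda-1}=p-1$, so any carry occurring in the two lowest digit positions propagates through the maximal digits and produces at least $\lambda-1$ carries; and since $p^2a+s\equiv s$ and $k\equiv p^2-j\pmod{p^2}$, hypothesis \eqref{c2} is exactly the statement that such a carry occurs. The crucial point is that $s$ is absorbed into the exponent $p^2rn+s$, so only one binomial coefficient per term ever needs to be examined.

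Your plan instead needs, for each $\ell$, a lower bound on $\nu_p\bigl(\xi_{p^2r,N,t}(p^\lambda m-\ell-j)\bigr)$, and this is where it breaks down. Lemma \ref{higherdiv} cannot supply that bound: it is a statement about the $q$-adic order of the low-degree part of $(1-(\zeta_k-q)^{krp})^m$ modulo $p^\lambda$, i.e.\ it controls coefficients of \emph{small} powers of $q$, whereas the indices $p^\lambda m-\ell-j$ you need are large; moreover its hypothesis requires the exponent to have the shape $krp$ with $k$ the order of the root of unity, and $p^2r$ need not be a multiple of $N$ (no condition such as $N\mid pr$ is assumed in Lemma \ref{xicon1}), nor is $\mathscr{F}_t((\zeta_N-q)^{p^2r})$ a power of a single such factor. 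What actually makes the inner coefficients divisible by powers of $p$ is again the collapse into pure powers $(\zeta_N-q)^{p^2a}$ together with Kummer's theorem --- which is precisely the structural observation your proposal is missing. If you add that observation, the convolution and the term-by-term valuation bookkeeping become unnecessary; without it, the ``interplay'' you describe between $\nu_p\binom{s}{\ell}$ and the inner valuation is not established. (A smaller issue: the sum $\sum_{\ell=0}^{s}$ presumes $s\ge 0$, while the lemma allows any integer $s$; the paper's route avoids this.)
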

\begin{proof}
We note that
\begin{eqnarray*}
\mathscr{F}_t(q)=\sum_{n\geq 0}\left(\prod_{j=1}^n(1-q^j)\right)f_{n,t}(q),
\end{eqnarray*}
where $f_{n,t}(q)\in\mathbb{Z}[q]$. Thus we have
\begin{eqnarray}
\sum_{n\geq 0}\xi_{p^2r,s,N,t}(n)q^n&=&(\zeta_N-q)^s\sum_{n\geq 0}\prod_{j=1}^n(1-(\zeta_N-q)^{p^2rj})f_{n,t}((\zeta_N-q)^{p^2r})\notag\\
&=&\sum_{n\geq 0}c_{n}(\zeta_N-q)^{p^2rn+s},
\end{eqnarray}
for some $c_n\in\mathbb{Z}$. It then suffices to show for any integer $a$ that the coefficient of $q^{p^\lambda m-j}$ in $(\zeta_N-q)^{p^2a+s}$ vanishes modulo $p^{\lambda-1}$. Equivalently, it is enough to show, for any integer $a$, that
\begin{eqnarray}
\binom{p^2a+s}{p^\lambda m-j}\equiv 0\;(\mbox{mod}\;p^{\lambda-1}).
\end{eqnarray}
Let $n=p^2a+s,\;k=p^\lambda m-j$ and $k'=n-k$. Also let $k_0, k_1, \cdots$ and $k_0', k_1',\cdots$ be the $p$-adic digits of $k$ and $k'$ respectively. By Theorem \ref{Kum}, we know that the $p$-adic valuation of $\binom{n}{k}$ equals the number of carries when adding $k$ and $n-k$ in base $p$. It is clear that
\begin{eqnarray}\label{kexp1}
k=(p-j)+(p^{\lambda}-p)+p^{\lambda}(m-1)=(p-j)+(p-1)(p+p^2+\cdots+p^{\lambda-1})+p^{\lambda}(m-1).
\end{eqnarray}
Next, let $m-1=m_0+m_1p+\cdots$ be the $p$-adic expansion of $m-1$. Then (\ref{kexp1}) implies that $k_0=p-j$, and $k_1=k_2=\cdots=k_{\lambda-1}=p-1$. Since the digits $k_1, k_2,\cdots, k_{\lambda-1}$ have maximal value, it follows that, if a carry occurs when adding $k_0+k_1p$ and $k_0'+k_1'p$, there will be at least $\lambda-1$ carries when adding $k$ and $k'$. Thus the lemma follows from Theorem \ref{Kum}.

It only remains to see if there is a carry when adding $k_0+k_1p$ and $k_0'+k_1'p$ which, again by Theorem \ref{Kum}, is equivalent to condition (\ref{c2}) in the lemma and we are done.  
\end{proof}
The next result is Lemma 3.4 in \cite{Straub} and is an application of Kummer's Theorem \ref{Kum}. 
\begin{lem}\label{St1}
Let $p$ be a prime, $a$ an integer and $i\in\{0,1,\cdots,p-1\}$. If $j$ is an integer such that $0<j<p-i$, then
\begin{eqnarray*}
\binom{pa+i}{p^\lambda m-j}\equiv 0\;(\emph{mod}\;p^\lambda).
\end{eqnarray*}
\end{lem}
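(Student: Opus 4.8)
The plan is to invoke Kummer's theorem (Theorem \ref{Kum}) and track the carries occurring in the base-$p$ addition of $k := p^\lambda m - j$ and $n-k$, where $n := pa+i$. Since the $p$-adic valuation of $\binom{n}{k}$ equals the number of such carries, it suffices to exhibit at least $\lambda$ carries; this immediately gives $\binom{pa+i}{p^\lambda m - j}\equiv 0\;(\mathrm{mod}\;p^\lambda)$.

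First I would record the base-$p$ expansion of $k$, exactly as in the proof of Lemma \ref{xicon1}. Writing
\begin{eqnarray*}
p^\lambda m - j = p^\lambda(m-1) + (p^\lambda - p) + (p-j),\qquad p^\lambda - p = (p-1)(p + p^2 + \cdots + p^{\lambda-1}),
\end{eqnarray*}
one reads off the lowest digits
\begin{eqnarray*}
k_0 = p-j,\qquad k_1 = k_2 = \cdots = k_{\lambda-1} = p-1,
\end{eqnarray*}
the remaining digits being those of $m-1$. The hypothesis $0<j<p-i$ forces $1\leq p-j\leq p-1$, so $k_0$ is a legitimate digit.

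Next I would identify the zeroth digit of $n-k$. Since $n\equiv i$ and $k\equiv -j\;(\mathrm{mod}\;p)$, we have $n-k\equiv i+j\;(\mathrm{mod}\;p)$, and because $0<i+j<p$ (again from $j<p-i$) the zeroth digit of $n-k$ is exactly $i+j$. Adding the zeroth columns gives $(p-j)+(i+j)=p+i\geq p$, so a carry is produced out of position $0$. Then, since $k_1=\cdots=k_{\lambda-1}=p-1$ are maximal, an incoming carry at any of the positions $1,\dots,\lambda-1$ forces that column sum to be at least $(p-1)+0+1=p$ and hence to carry again. By induction the carry propagates through positions $0,1,\dots,\lambda-1$, producing at least $\lambda$ carries, and Theorem \ref{Kum} then yields $v_p\binom{pa+i}{p^\lambda m - j}\geq\lambda$.

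The argument is essentially routine once the digit structure is in place; the only step requiring attention is the zeroth column, where the condition $0<j<p-i$ is used twice — to make $p-j$ a valid digit and to guarantee the initial carry via $i+j<p$. I do not anticipate a genuine obstacle beyond this bookkeeping, since the maximality of the intermediate digits $p-1$ makes the carry chain automatic and independent of the digits of $a$ and $m-1$ (so the statement holds for any integer $a$, including negative values).
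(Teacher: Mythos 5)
Your proof is correct and is essentially the intended argument: the paper states this lemma without proof, citing Straub's Lemma 3.4 and noting only that it is ``an application of Kummer's Theorem \ref{Kum},'' and your carry-counting in base $p$ is precisely that application, using the same digit bookkeeping for $p^\lambda m - j$ that the paper itself carries out in the proof of Lemma \ref{xicon1}. The one point worth keeping in mind is that for negative $a$ (or $n<k$) you are implicitly relying on the generalized form of Kummer's theorem for arbitrary integer upper entry, which is exactly how Theorem \ref{Kum} is stated in the paper, so no gap arises.
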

\section{Proofs of the main results}\label{PM}
\subsection{Proof of Theorem \ref{main1}}
Let $\zeta_N$ be an $N$th root of unity and $n\geq N\lambda$. Consider the following:
\begin{eqnarray}\label{disag1}
&&F((\zeta_N-q)^r;pn-1)=\sum_{i=0}^{p-1}(\zeta_N-q)^{ri}A_{p}(pn-1,i,(\zeta_N-q)^{rp})\\
&=&\sum_{\substack{0\leq i\leq p-1\\i\in S(p,1,0)}}(\zeta_N-q)^{ri}A_{p}(pn-1,i,(\zeta_N-q)^{rp})+\sum_{\substack{0\leq i\leq p-1\\i\not\in S(p,1,0)}}(\zeta_N-q)^{ri}A_{p}(pn-1,i,(\zeta_N-q)^{rp}).\notag
\end{eqnarray}
Using Theorem 1 in \cite{AK}, it follows for $i\not\in S(p,1,0)$ that
\begin{eqnarray}\label{Scott2}
A_{p}(pn-1,i,q)=(1-q^N)^{\lfloor n/N\rfloor}\beta_p(n,i,q),
\end{eqnarray}
where $\beta_p(n,i,q)\in\mathbb{Z}[q]$. Thus \eqref{Scott2} yields
\begin{eqnarray}\label{Scott3}
A_{p}(pn-1,i,(\zeta_N-q)^{rp})=(1-(\zeta_N-q)^{Nrp})^{\lfloor n/N\rfloor}\beta_{p,\zeta_N}(n,i,q),
\end{eqnarray}
where $\beta_{p,\zeta_N}(n,i,q)\in\mathbb{Z}[\zeta_N][[q]]$. Using Lemma \ref{higherdiv} in \eqref{Scott3} we get
\begin{eqnarray}\label{Scott4}
A_p(pn-1,i,(\zeta_N-q)^{rp})\equiv O(q^{p\lfloor n/N\rfloor-(p-1)(\lambda-1)})\;(\mbox{mod}\;p^\lambda).
\end{eqnarray}
Thus \eqref{disag1} and \eqref{Scott4} imply that
\begin{eqnarray}\label{tail1}
F((\zeta_N-q)^r;pn-1)&\equiv &\sum_{\substack{0\leq i\leq p-1\\i\in S(p,1,0)}}(\zeta_N-q)^{ri}A_p(pn-1,i,(\zeta_N-q)^{rp})\notag\\&&+\;O(q^{p\lfloor n/N\rfloor-(p-1)(\lambda-1)})\;\;(\mbox{mod}\;p^\lambda). 
\end{eqnarray}
Choosing $n$ large enough, it now suffices to show, in view of \eqref{tail1} that the coefficient of $q^{p^\lambda m-j}$ in 
\begin{eqnarray}\label{aeq1}
(\zeta_N-q)^{ri+s}A_p(pn-1,i,(\zeta_N-q)^{rp})
\end{eqnarray}
vanishes modulo $p^\lambda$ for all $i\in S(p,1,0)$. First, let $j\in\left\{1,2,\cdots, p-1-\max{S(p,r,s)}\right\}$. Then since $A_p(pn-1,i,q)\in\mathbb{Z}[q]$, it suffices to show that the coefficient of $q^{p^\lambda m-j}$ in
\begin{eqnarray}
(\zeta_N-q)^{ap+ri+s}
\end{eqnarray}
vanishes modulo $p^\lambda$ for all $a\in\mathbb{Z}$. Since $p\nmid r$, we have $ri+s\equiv i'\;(\mbox{mod}\;p)$ for some $i'\in S(p,r,s)$. Thus, $ap+ri+s=a'p+i'$ where $a'\in\mathbb{Z}$ and $j<p-i'$. Thus it suffices to show that 
\begin{eqnarray}\label{bin1}
\binom{a'p+i'}{p^\lambda m-j}\equiv 0\;(\mbox{mod}\;p^\lambda).
\end{eqnarray}
Using Lemma \ref{St1}, we immediately see that (\ref{bin1}) is satisfied and we are done.

Next, suppose $(p,r,s)$ satisfies condition \ref{c1} and that $j\in\left\{1,2,\cdots, p-1-\max{S^*(p,r,s)}\right\}$. Let $i_0\in S(p,1,0)$ be such that $24i_0\equiv -1\;(\mbox{mod}\;p)$. By Lemma 2.2 in \cite{Straub} we have
\begin{eqnarray}
\mbox{}\\ A_p(pn-1,i_0,(\zeta_N-q)^{rp})=\left(\dfrac{12}{p}\right)p(\zeta_N-q)^{rp\lfloor p/24\rfloor}F((\zeta_N-q)^{rp^2},pn-1)+O((1-(\zeta_N-q)^{rp})^n).\notag
\end{eqnarray}
If $N\mid rp$, we easily see that $O((1-(\zeta_N-q)^{rp})^n)=O(q^n)$ and thus for large $n$ we can ignore this term. Thus it suffices to show that the coefficient of $q^{p^\lambda m-j}$ in 
\begin{eqnarray}
\left(\dfrac{12}{p}\right)p(\zeta_N-q)^{ri_0+s+rp\lfloor p/24\rfloor}F((\zeta_N-q)^{rp^2})
\end{eqnarray}
vanishes modulo $p^\lambda$. This is equivalent to the fact that 
\begin{eqnarray}\label{cong1}
\xi_{p^2r,ri_0+s+rp\lfloor p/24\rfloor,N}\equiv 0\;(\mbox{mod}\;p^{\lambda-1}). 
\end{eqnarray}
Congruence \ref{cong1} is essentially a consequence of Lemma 3.2 in \cite{Straub}, Kummer's Theorem \ref{Kum} and the condition that $\mbox{digit}_1(s-r/24;p)\neq p-1$. Thus, it only remains to show the coefficient of $q^{p^\lambda m-j}$ in (\ref{aeq1}) vanishes modulo $p^\lambda$ for $i\in S(p,1,0)\setminus\{i_0\}$, which follows in a similar manner. This completes the proof of the theorem.
\subsection{Proof of Theorem \ref{main2}}
Let $\zeta_N$ be an $N$th root of unity and $n\geq N\lambda$. Consider the following:
\begin{eqnarray}\label{disag3}
&&\mathscr{F}_t((\zeta_N-q)^r;pn-1)=\sum_{i=0}^{p-1}(\zeta_N-q)^{ri}\mathcal{A}_{p,t}(pn-1,i,(\zeta_N-q)^{rp})\\
&=&\sum_{\substack{0\leq i\leq p-1\\i\in S_{t,\chi_t}(p,1,0)}}(\zeta_N-q)^{ri}\mathcal{A}_{p,t}(pn-1,i,(\zeta_N-q)^{rp})+\sum_{\substack{0\leq i\leq p-1\\i\not\in S_{t,\chi_t}(p,1,0)}}(\zeta_N-q)^{ri}\mathcal{A}_{p,t}(pn-1,i,(\zeta_N-q)^{rp}).\notag
\end{eqnarray}
Using Lemma \ref{Scott}, it follows for $i\not\in S_{t,\chi_t}(p,1,0)$ that
\begin{eqnarray}\label{Scott22}
\mathcal{A}_{p,t}(pn-1,i,q)=(1-q^N)^{\lfloor n/N\rfloor}\beta_{p,t}(n,i,q),
\end{eqnarray}
where $\beta_{p,t}(n,i,q)\in\mathbb{Z}[q]$. Thus \eqref{Scott22} yields
\begin{eqnarray}\label{Scott33}
\mathcal{A}_{p,t}(pn-1,i,(\zeta_N-q)^{rp})=(1-(\zeta_N-q)^{Nrp})^{\lfloor n/N\rfloor}\beta_{p,\zeta_N,t}(n,i,q),
\end{eqnarray}
where $\beta_{p,\zeta_N,t}(n,i,q)\in\mathbb{Z}[\zeta_N][[q]]$. Using Lemma \ref{higherdiv} in \eqref{Scott33} we get
\begin{eqnarray}\label{Scott44}
\mathcal{A}_{p,t}(pn-1,i,(\zeta_N-q)^{rp})\equiv O(q^{p\lfloor n/N\rfloor-(p-1)(\lambda-1)})\;(\mbox{mod}\;p^\lambda).
\end{eqnarray}
Thus \eqref{disag3} and \eqref{Scott44} imply that
\begin{eqnarray}\label{tail3}
\mathscr{F}_t((\zeta_N-q)^r;pn-1)&\equiv &\sum_{\substack{0\leq i\leq p-1\\i\in S_{t,\chi_t}(p,1,0)}}(\zeta_N-q)^{ri}\mathcal{A}_{p,t}(pn-1,i,(\zeta_N-q)^{rp})\notag\\&&+\;O(q^{p\lfloor n/N\rfloor-(p-1)(\lambda-1)})\;\;(\mbox{mod}\;p^\lambda). 
\end{eqnarray}
Choosing $n$ large enough, it now suffices to show, in view of \eqref{tail3} that the coefficient of $q^{p^\lambda m-j}$ in 
\begin{eqnarray}\label{aeq3}
(\zeta_N-q)^{ri+s}\mathcal{A}_{p,t}(pn-1,i,(\zeta_N-q)^{rp})
\end{eqnarray}
vanishes modulo $p^\lambda$ for all $i\in S_{t,\chi_t}(p,1,0)$. First, let $j\in\left\{1,2,\cdots, p-1-\max{S_{t,\chi_t}(p,r,s)}\right\}$. Then since $\mathcal{A}_{p,t}(pn-1,i,q)\in\mathbb{Z}[q]$, it suffices to show that the coefficient of $q^{p^\lambda m-j}$ in
\begin{eqnarray}
(\zeta_N-q)^{ap+ri+s}
\end{eqnarray}
vanishes modulo $p^\lambda$ for all $a\in\mathbb{Z}$. Since $p\nmid r$, we have $ri+s\equiv i'\;(\mbox{mod}\;p)$ for some $i'\in S_{t,\chi_t}(p,r,s)$. Thus, $ap+ri+s=a'p+i'$ where $a'\in\mathbb{Z}$ and $j<p-i'$. Thus it suffices to show that 
\begin{eqnarray}\label{bin3}
\binom{a'p+i'}{p^\lambda m-j}\equiv 0\;(\mbox{mod}\;p^\lambda).
\end{eqnarray}
Using Lemma \ref{St1}, we immediately see that (\ref{bin3}) is satisfied and we are done.

Next, suppose $(p,r,s)$ satisfies condition \ref{c4} and that $j\in\left\{1,2,\cdots, p-1-\max{S^*_{t,\chi_t}(p,r,s)}\right\}$. Let $p\in\mathcal{P}_1\cup\mathcal{P}_2\cup\mathcal{P}_3\cup\mathcal{P}_4$. Let $i_0\in S_{t,\chi_t}(p,1,0)$ be such that $b\cdot i_0\equiv -a\;(\mbox{mod}\;p)$. By Lemma \ref{divcoeff} and (\ref{sum1}) we have
\begin{eqnarray}\label{divcoeff1}
\ \ \ \ \ \ \ \ \ \mathcal{A}_{p,t}(pn-1,i_0,q)=\left\{\begin{array}{cc}
     pq^{\lfloor ap/b\rfloor+\lfloor C_{a,b,p}/p\rfloor}\mathscr{F}_t(q^p,pn-1)+(1-q)^n\beta_{p,t}(n,i_0,q),&  \mbox{if}\;p\in\mathcal{P}_1\cup\mathcal{P}_2,\\
     -pq^{\lfloor ap/b\rfloor+\lfloor C_{a,b,p}/p\rfloor}\mathscr{F}_t(q^p,pn-1)+(1-q)^n\gamma_{p,t}(n,i_0,q),& \mbox{if}\;p\in\mathcal{P}_3\cup\mathcal{P}_4, 
\end{array}\right.
\end{eqnarray}
where $\beta_{p,t}(n,i_0,q), \gamma_{p,t}(n,i_0,q)\in\mathbb{Z}[q]$. We will prove the result for $p\in\mathcal{P}_1\cup\mathcal{P}_2$ since the other case for primes $p\in\mathcal{P}_3\cup\mathcal{P}_4$ is exactly similar. Thus, equation \eqref{divcoeff1} yields
\begin{eqnarray*}
\mathcal{A}_{p,t}(pn-1,i_0,(\zeta_N-q)^{rp})=p(\zeta_N-q)^{rp(\lfloor ap/b\rfloor+\lfloor C_{a,b,p}/p\rfloor)}\mathscr{F}_t((\zeta_N-q)^{rp^2},pn-1)+O((1-(\zeta_N-q)^{rp})^n).
\end{eqnarray*}
If $N\mid rp$, we easily see that $O((1-(\zeta_N-q)^{rp})^n)=O(q^n)$ and thus for large $n$ we can ignore this term. Thus it suffices to show that the coefficient of $q^{p^\lambda m-j}$ in 
\begin{eqnarray}
p(\zeta_N-q)^{ri_0+s+rp(\lfloor ap/b\rfloor+\lfloor C_{a,b,p}/p\rfloor)}\mathscr{F}_t((\zeta_N-q)^{rp^2})
\end{eqnarray}
vanishes modulo $p^\lambda$. This is equivalent to the fact that 
\begin{eqnarray}\label{cong3}
\xi_{p^2r,ri_0+s+rp(\lfloor ap/b\rfloor+\lfloor C_{a,b,p}/p\rfloor),N,t}\equiv 0\;(\mbox{mod}\;p^{\lambda-1}).
\end{eqnarray}
Using Lemma \ref{xicon1} we immediately see that \eqref{cong3} is satisfied, if we can show that
\begin{eqnarray}\label{equiv1}
\binom{ri_0+s+rp(\lfloor ap/b\rfloor+\lfloor C_{a,b,p}/p\rfloor)}{p^2-j}\equiv 0\;(\mbox{mod}\;p).
\end{eqnarray}
It is clear that the $p$-adic expansion of $-a/b$ has the form $i_0+i_1p+i_2p^2+\cdots$ where $i_1=\lfloor ap/b\rfloor+\lfloor C_{a,b,p}/b\rfloor\;(\mbox{mod}\;p)$. Indeed,
\begin{eqnarray}\label{firdig}
\mbox{digit}_1\left(-\dfrac{a}{b};p\right)=\mbox{digit}_1\left(\dfrac{a(p^2-1)}{b};p\right)=\dfrac{1}{p}\left(\dfrac{a(p^2-1)}{b}-i_0\right)\;(\mbox{mod}\;p)=\lfloor ap/b\rfloor+\lfloor C_{a,b,p}/p\rfloor\;(\mbox{mod}\;p).\notag\\
\end{eqnarray}
Thus, since $p\nmid r$, writing $\lfloor ap/b\rfloor+\lfloor C_{a,b,p}/p\rfloor=p\cdot k+i_1$ for some $k\in\mathbb{Z}$, it follows from \eqref{firdig} that
\begin{eqnarray}\label{equiv2}
ri_0+s+rp(\lfloor ap/b\rfloor+\lfloor C_{a,b,p}/b\rfloor)=s+r(i_0+p\cdot i_1)+krp^2\equiv s-ar/b\;(\mbox{mod}\;p^2).
\end{eqnarray}
Combining \eqref{equiv1} and \eqref{equiv2}, we only need to prove that
\begin{eqnarray}
\binom{s-ar/b}{p^2-j}\equiv 0\;(\mbox{mod}\;p),
\end{eqnarray}
which follows from Theorem \ref{Kum}, since $\mbox{digit}_1(p^2-j;p)=p-1$, but $\mbox{digit}_1(s-ar/b;p)<p-1$, by assumption. This completes the proof.
\subsection{Proof of Corollary \ref{main3}}
This result corresponds to the case $t=2$ in Theorem \ref{main2}. However, in this case we note that the primes $p\in\mathcal{P}_1\cup\mathcal{P}_2\cup\mathcal{P}_3\cup\mathcal{P}_4$ are precisely given by the congruences $p\equiv \pm 1, \pm 7\;(\mbox{mod}\;24)$, and we are done.
\section{Comments and Conclusion}
In an upcoming work with Robert Osburn, we show that for all $t\geq 2$, the
function $\mathscr{F}_t(q)$ (multiplied with a suitable power of $q$) is a quantum modular
form of weight $3/2$ in a certain subgroup of $SL_2(\mathbb{Z})$.

At this point, several open questions could be raised. First, it looks very likely that the methods used in this work could be used to prove congruences of coefficients at roots of unity related to Kontsevich-Zagier series associated to the family of torus knots $T(2, 2t+1)$, $t\geq 1$ (see \cite[Sec. 4]{Be}). Secondly, one could also investigate the asymptotic properties for the numbers $\xi_{r,s,N,t}(n)$. Finally, for further open questions, one should consult \cite{AS, Be,G, Straub}. 

\section*{Acknowledgement}
My research was supported by grant SFB F50-06 of the Austrian Science Fund (FWF). 

The author would like to thank George Andrews, Robert Osburn, Peter Paule and Armin Straub for their support and encouragement in carrying out this work. In particular, the author thanks Robert Osburn for describing how the Fishburn and generalized Fishburn numbers relate to torus knots, suggesting this more general study and sending the preprint \cite{Be}. The author also thanks him and Peter Paule for all the comments in an earlier draft of this work which improved exposition.

\end{document}